\theoremstyle{definition}
\newtheorem{dfn}{Definition}[section]
\newtheorem{example}[dfn]{Example}
\newtheorem{rem}[dfn]{Remark}
\theoremstyle{plain}
\newtheorem{thm}[dfn]{Theorem}
\newtheorem*{thm*}{Theorem}
\newtheorem*{thmA}{Theorem \ref{thm:main1}}
\newtheorem*{thmB}{Theorem \ref{thm:main2}}
\newtheorem*{thmC}{Theorem \ref{thm:main3}}
\newtheorem{prp}[dfn]{Proposition}
\newtheorem{cor}[dfn]{Corollary}
\newtheorem{lem}[dfn]{Lemma}
\newtheorem*{lem:factorization}{Lemma \ref{lem:factorization}}
\newtheorem*{lemC}{Lemma \ref{lem:main3-lemma}}
\title{Cylindrical Dyck paths and the Mazorchuk-Turowska equation}
\author[1]{Jonas T. Hartwig\thanks{\textit{Email:} \texttt{jth@iastate.edu}}}
\affil[1]{Department of Mathematics,
Iowa State University}
\author[2]{Daniele Rosso\thanks{\textit{Email:} \texttt{rosso@math.ucr.edu}}}
\affil[2]{Department of Mathematics,
University of California Riverside}
\date{ }
\newcommand{\al}{\alpha} \newcommand{\be}{\beta}
\newcommand{\ga}{\gamma} \newcommand{\Ga}{\Gamma}
\newcommand{\ep}{\varepsilon}
\newcommand{\la}{\lambda}
   \newcommand{\si}{\sigma}
\newcommand{\C}{\mathbb{C}}
\newcommand{\Z}{\mathbb{Z}}
\newcommand{\GCD}{\mathrm{GCD}}
\newcommand{\iv}[2]{\llbracket #1,#2 \rrbracket}
\renewcommand{\tilde}{\widetilde}
\newcommand{\un}{\underline}
\begin{document}
\maketitle
\begin{abstract}
We classify all solutions $(p,q)$ to the equation $p(u)q(u)=p(u+\beta)q(u+\alpha)$ where $p$ and $q$ are complex polynomials in one indeterminate $u$, and $\alpha$ and $\beta$ are fixed but arbitrary complex numbers.
This equation is a special case of a system of equations which ensures that certain algebras defined by generators and relations are non-trivial.
We first give a necessary condition for the existence of non-trivial solutions to the equation. Then, under this condition, we use combinatorics of generalized Dyck paths to describe all solutions and a canonical way to factor each solution into a product of irreducible solutions.
\end{abstract}

\section{Introduction}

The goal of this paper is to give a precise description of all solutions $(p,q)$ to the equation
\begin{equation}\label{eq:main-equation}
p(u)q(u)=p(u+\beta)q(u+\alpha)
\end{equation}
where $p$ and $q$ are complex polynomials in one indeterminate $u$,
and $\alpha$ and $\beta$ are fixed but arbitrary complex numbers.

\subsection{Background and motivation}

The motivation for this problem comes from the 
subject of \emph{twisted generalized Weyl algebras}.
We fix a commutative domain $R$, an $n$-tuple $\si=(\si_1,\ldots,\si_n)$ of commuting automorphisms of $R$ and an $n$-tuple $t=(t_1,\ldots,t_n)$ of non-zero elements from the center of $R$. To this data we associate the \emph{generalized Weyl algebra} (\emph{GWA}), denoted $R(\si,t)$ which is defined as the ring extension of $R$ by generators $X_1,Y_1,\ldots,X_n,Y_n$ modulo relations
\begin{subequations}\label{eq:GWA-relations}
\begin{gather}
\label{eq:GWA-relations1}
X_ir=\si_i(r)X_i,\quad Y_ir=\si_i^{-1}(r)Y_i,\quad
Y_iX_i=t_i,\quad X_iY_i=\si_i(t_i), \quad
X_iY_j=Y_jX_i,\;i\neq j,\\ 
\label{eq:GWA-relations2}
X_iX_j=X_jX_i,\quad Y_iY_j=Y_jY_i.
\end{gather}
\end{subequations}
The number $n$ is called the \emph{rank} (or \emph{degree}) of $R(\si,t)$.
These algebras were introduced in \cite{Bavula1992}, independently studied without a name in \cite{Jordan1993}, and under the name \emph{hyperbolic ring} in \cite{Rosenberg1995}. 
Examples in rank $1$ include the Weyl algebra $A_1(\C)$ and quantum Weyl algebra $A_1^q(\C)$; the enveloping algebras $U(\mathfrak{g})$ for $\mathfrak{g}=\mathfrak{sl}_2, \mathfrak{sl}_3^+$ where $\mathfrak{sl}_3^+$ is the $3$-dimensional Heisenberg Lie algebra of upper triangular $3\times 3$-matrices; quantizations $U_q(\mathfrak{g})$ for the above $\mathfrak{g}$; deformations of algebras of functions on a $2$-dimensional sphere \cite{Podles1987},\cite{BavBek2000}.
The class of generalized Weyl algebras is closed under taking tensor products \cite{Bavula1992} and taking invariants with respect to a graded automorphism of finite order \cite{FarSolSua2003}.
As observed in \cite{Bavula1992}, the canonical map $R\to R(\si,t)$ is injective if and only if the following holds:
\begin{equation}\label{eq:GWA-consistency}
\text{(GWA Consistency Equations)}\qquad
 \si_i(t_j)= t_j\quad \forall i\neq j.
\end{equation}
In particular, \eqref{eq:GWA-consistency} ensures that the algebra $R(\si,t)$ is non-trivial.

Dropping relations \eqref{eq:GWA-relations2} (and instead taking the quotient by a certain ideal, a detail irrelevant to the present paper) the resulting algebra is called a \emph{twisted generalized Weyl algebra} (\emph{TGWA})  and is denoted by $\mathscr{A}(R,\si,t)$. These algebras were introduced by Mazorchuk and Turowska in \cite{MazTur1999}. Interesting examples of of TGWAs include deformations of the higher degree Weyl algebras \cite{MazTur2002},\cite{FutHar2012b}; extended OGZ algebras and the Mickelsson-Zhelobenko step algebra $Z(\mathfrak{gl}_{n+1},\mathfrak{gl}_n\times \mathfrak{gl}_1)$ \cite{MazPonTur2003}; a family of algebras attached to multiquivers \cite{HarSer2015pre}; certain primitive quotients of $U(\mathfrak{gl}_n)$ and $U(\mathfrak{sp}_{2n})$ \cite{HarSer2015pre}. The representation theory for these algebras is particularly interesting \cite{MazTur1999},\cite{MazTur2002},\cite{MazPonTur2003},\cite{Hartwig2006},\cite{FutHar2012b}.
For TGWAs the consistency equations \eqref{eq:GWA-consistency} get weakened to the following:
\begin{equation}\label{eq:TGWA-consistency}
\text{(TGWA Consistency Equations)}
\qquad
\left\{\begin{aligned}
t_it_j &=\si_i^{-1}(t_j)\si_j^{-1}(t_i) &&\quad\forall i\neq j,\\ 
\si_i\si_k(t_j)t_j &=\si_i(t_j)\si_k(t_j)&&\quad \forall i\neq j\neq k\neq i.
\end{aligned}\right.
\end{equation}
More precisely, the canonical ring homomorphism $R\to\mathscr{A}(R,\si,t)$ is injective if and only if
\eqref{eq:TGWA-consistency} holds
 \cite[Theorem~A]{FutHar2012a}. That the first equation in \eqref{eq:TGWA-consistency} is necessary was also observed in \cite{MazTur1999}, therefore we refer to that equation as the \emph{Mazorchuk-Turowska equation}.
Thus, finding solutions $t_1,\ldots,t_n$ to \eqref{eq:TGWA-consistency} for a given ring $R$ and given automorphisms $\si_i$ is equivalent to obtaining non-trivial algebras $\mathscr{A}(R,\si,t)$.
Taking $n=2$,  \eqref{eq:TGWA-consistency} becomes just a single equation:
\begin{equation} \label{eq:Rank-2-TGWA-consistency}
\text{(Rank Two TGWA Consistency Equation)}\qquad 
t_1t_2=\si_2^{-1}(t_1)\si_1^{-1}(t_2).
\end{equation}
Specializing further to the case when $R$ is the algebra $\C[u]$ of complex polynomials in one indeterminate $u$ and the automorphisms $\si_1$ and $\si_2$ are given by additive shift automorphisms: 
$\si_1\big(f(u)\big)=f(u-\al)$, $\si_2\big(f(u)\big)=f(u-\be)$ for some complex numbers $\al,\be\in\C$, and putting $t_1=p(u)$ and $t_2=q(u)$, \eqref{eq:Rank-2-TGWA-consistency} turns into \eqref{eq:main-equation}.
The following result summarizes the above discussion.
\begin{thm*}[Special case of {\cite[Theorem~A]{FutHar2012a}}]
Let $\al$ and $\be$ be non-zero complex numbers and $p$ and $q$ be non-zero complex polynomials of one variable.
Let $\mathscr{C}(\alpha,\beta,p,q)$ be the unital associative algebra over $\C$ with
generators
\[U, X_1, X_2, Y_1, Y_2\]
and defining relations
\begin{align*}
X_1U&=(U-\al)X_1,& Y_1U&=(U+\al)Y_1, \\ 
X_2U&=(U-\be)X_2,& Y_2U&=(U+\be)Y_2, \\ 
Y_1X_1 &= p(U),& Y_2X_2&=q(U), \\ 
X_1Y_2 &= Y_2X_1,& X_2Y_1&=Y_1X_2.
\end{align*}
Then the following two statements are equivalent:
\begin{enumerate}[{\rm (i)}]
\item The element $U\in\mathscr{C}(\alpha,\beta,p,q)$ is algebraically independent over $\C$.
\item The polynomials $p$ and $q$ satisfy \eqref{eq:main-equation}.
\end{enumerate}
In particular, \eqref{eq:main-equation} is sufficient for the algebra $\mathscr{C}(\al,\be,p,q)$ to be non-trivial.
\end{thm*}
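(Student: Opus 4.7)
The plan is to read this result off directly from \cite[Theorem~A]{FutHar2012a} by identifying $\mathscr{C}(\alpha,\beta,p,q)$ with the rank-two twisted generalized Weyl algebra $\mathscr{A}(R,\sigma,t)$ attached to the data $R=\C[U]$, $\sigma_1(U)=U-\alpha$, $\sigma_2(U)=U-\beta$, $t_1=p(U)$, $t_2=q(U)$. A side-by-side comparison of the $\mathscr{C}$-presentation with the TGWA relations \eqref{eq:GWA-relations1} shows that every relation in the theorem statement is precisely one of the TGWA relations for these parameters, so the assignment $U,X_i,Y_i\mapsto U,X_i,Y_i$ gives a canonical algebra homomorphism $\mathscr{C}(\alpha,\beta,p,q)\to\mathscr{A}(R,\sigma,t)$ which is the identity on $\C[U]$. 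One should also check that the omission of the degree-zero relations $X_iY_i=\sigma_i(t_i)$ from the $\mathscr{C}$-presentation causes no harm; they follow, up to torsion, from the identity $X_iY_iX_i=X_it_i=\sigma_i(t_i)X_i$.

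The second step is to translate the rank-two Mazorchuk--Turowska equation \eqref{eq:Rank-2-TGWA-consistency} for our data into \eqref{eq:main-equation}. Since $\sigma_1^{-1}$ acts on $\C[u]$ by $u\mapsto u+\alpha$ and $\sigma_2^{-1}$ by $u\mapsto u+\beta$, we have $\sigma_2^{-1}(t_1)=p(u+\beta)$ and $\sigma_1^{-1}(t_2)=q(u+\alpha)$, so $t_1t_2=\sigma_2^{-1}(t_1)\sigma_1^{-1}(t_2)$ reads literally $p(u)q(u)=p(u+\beta)q(u+\alpha)$. The TGWA consistency equations for this data are therefore exactly \eqref{eq:main-equation}. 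By \cite[Theorem~A]{FutHar2012a} the canonical ring map $R\to\mathscr{A}(R,\sigma,t)$ is injective iff the TGWA consistency equations hold, and since $R=\C[U]$, injectivity of that map is precisely algebraic independence of $U$ over $\C$.

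The main point requiring care is the exact relationship between $\mathscr{C}(\alpha,\beta,p,q)$ and the formal TGWA $\mathscr{A}(R,\sigma,t)$. For (ii)$\Rightarrow$(i), the existence of a homomorphism $\mathscr{C}(\alpha,\beta,p,q)\to\mathscr{A}(R,\sigma,t)$ fixing $U$ is enough to transport algebraic independence back from the target to the source. For (i)$\Rightarrow$(ii), I need to argue that failure of \eqref{eq:main-equation} already forces a polynomial identity on $U$ inside $\mathscr{C}(\alpha,\beta,p,q)$ itself, and this is the subtler point since the $\mathscr{C}$-presentation is a priori weaker than the standard TGWA one. The natural strategy is to compute a well-chosen degree-zero element, such as $Y_2Y_1X_1X_2$, in two different ways using only the intertwining relations $X_iY_j=Y_jX_i$ for $i\neq j$ together with $Y_iX_i=t_i$, and read off the required polynomial identity; alternatively one can tighten the identification with $\mathscr{A}(R,\sigma,t)$ to an isomorphism. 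Either way, modulo this bookkeeping the theorem is an immediate specialization of \cite[Theorem~A]{FutHar2012a}.
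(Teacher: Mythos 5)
Your proposal follows essentially the same route as the paper: the paper gives no independent proof of this statement, but simply records the dictionary $R=\C[u]$, $\si_1\big(f(u)\big)=f(u-\al)$, $\si_2\big(f(u)\big)=f(u-\be)$, $t_1=p(u)$, $t_2=q(u)$, under which the rank-two consistency equation \eqref{eq:Rank-2-TGWA-consistency} becomes \eqref{eq:main-equation}, and then cites \cite[Theorem~A]{FutHar2012a}. Your extra bookkeeping about matching the presentation of $\mathscr{C}(\al,\be,p,q)$ with the TGW construction (the omitted relations $X_iY_i=\si_i(t_i)$ and the quotient defining $\mathscr{A}(R,\si,t)$) is care beyond what the paper supplies, which treats the statement as a direct specialization of the cited theorem.
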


Some solutions to TGWA consistency equations have been found previously in \cite{MazTur1999},\cite{MazPonTur2003},\cite{Sergeev2001},\cite{Hartwig2010},\cite{HarSer2015pre}. In Section \ref{sec:previous-solutions} we discuss how the solutions from \cite{HarSer2015pre} are related to the present paper.

We expect that some of our methods can be applied to more general rings $R$ and possibly to higher rank. There is also a quantized version of the Mazorchuk-Turowska equation, introduced in \cite{MazTur2002}. In rank two this equation is $t_1t_2=\xi\si_2^{-1}(t_1)\si_1^{-1}(t_2)$ where $\xi$ is a fixed non-zero complex number. It would be interesting to classify solutions $(t_1,t_2)\in R\times R$ to that equation for some choice of ring $R$ and commuting automorphisms $\si_1$ and $\si_2$. Finally, we expect the representation theory for the algebras $\mathscr{C}(\al,\be,p,q)$ to be combinatorially interesting.
These will be subjects for future research.

\subsection{Description of the main results}
Our first main result gives a necessary condition on $\al$ and $\be$ for the existence of non-trivial solutions to \eqref{eq:main-equation}. Notice that we can rescale both $p$ and $q$ by non-zero constants in \eqref{eq:main-equation}. Hence it suffices to consider solutions where $p$ and $q$ are monic polynomials.
\begin{thmA}
Let $\al$ and $\be$ be non-zero complex numbers. Assume that $m\al+n\be\neq 0$ for all positive integers $m$ and $n$. Then the only solution $(p,q)$ to \eqref{eq:main-equation}, in which $p$ and $q$ are non-zero and monic, is the trivial solution $(p,q)=(1,1)$.
\end{thmA}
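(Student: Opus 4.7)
My plan is to reduce the problem to a simple comparison of coefficients and then invoke the hypothesis on $\alpha,\beta$. Let $p$ and $q$ be non-zero monic solutions to \eqref{eq:main-equation}, and write $d_1=\deg p$ and $d_2=\deg q$. Since both $p$ and $q$ are monic, both sides of \eqref{eq:main-equation} are monic of degree $d_1+d_2$, so the leading coefficients match automatically. The first interesting invariant is therefore the next coefficient, equivalently the sum of all roots (with multiplicity).

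The key computation is to compare the coefficient of $u^{d_1+d_2-1}$ on both sides. If $p(u)=u^{d_1}+c\,u^{d_1-1}+\cdots$ and $q(u)=u^{d_2}+c'\,u^{d_2-1}+\cdots$, then a direct binomial expansion gives
\[
p(u+\beta)=u^{d_1}+(c+d_1\beta)u^{d_1-1}+\cdots,\qquad
q(u+\alpha)=u^{d_2}+(c'+d_2\alpha)u^{d_2-1}+\cdots.
\]
Multiplying out and matching the subleading coefficients of $p(u)q(u)$ and $p(u+\beta)q(u+\alpha)$ yields
\[
c+c'=c+c'+d_1\beta+d_2\alpha,\qquad\text{hence}\qquad d_2\alpha+d_1\beta=0.
\]

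Now I would finish with a case analysis on $(d_1,d_2)$. If $d_1>0$ and $d_2>0$, then $(m,n)=(d_2,d_1)$ gives a pair of positive integers with $m\alpha+n\beta=0$, contradicting the hypothesis. If $d_1>0$ and $d_2=0$, the relation forces $\beta=0$, contradicting $\beta\neq 0$; symmetrically, $d_1=0$ and $d_2>0$ forces $\alpha=0$. The only remaining possibility is $d_1=d_2=0$, and since $p$ and $q$ are monic constants this gives $(p,q)=(1,1)$.

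I do not foresee a genuine obstacle: the entire argument relies only on the transformation of a single coefficient under the shift $u\mapsto u+c$, which is a one-line binomial computation, and on the exclusion of positive integer relations between $\alpha$ and $\beta$. The real substance of the paper, as suggested by the abstract, appears to lie in the complementary regime where such a relation $m\alpha+n\beta=0$ does hold and a rich combinatorial structure emerges; the present theorem serves merely as a non-existence boundary that justifies restricting attention to the rational/commensurable case.
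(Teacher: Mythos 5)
Your proof is correct, and it is genuinely different from the one in the paper. You compare the coefficient of $u^{d_1+d_2-1}$ (equivalently, the sum of roots) on both sides of \eqref{eq:main-equation} to get $d_2\al+d_1\be=0$, and then the hypothesis kills every case except $d_1=d_2=0$; the only notational wrinkle is that when $d_1=0$ or $d_2=0$ the displayed expansions degenerate, which is most cleanly handled by phrasing the comparison as ``sum of roots of $p(u)q(u)$ equals sum of roots of $p(u+\be)q(u+\al)$,'' but this is cosmetic, not a gap. The paper instead derives Theorem \ref{thm:main1} as a corollary of its structural machinery: Proposition \ref{prp:factorization} chases a root $\la_0$ of $pq$ through the functional equation, producing a chain of roots $\la_k=\la_0+\al_{j_1}+\cdots+\al_{j_k}$ that must eventually cycle because the root set is finite, which yields a non-empty sequence in $\mathsf{M}(\al,\be)$ and hence, by Lemma \ref{lem:al1al2}, a relation $m\al+n\be=0$ with $m,n$ positive. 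Your argument is shorter and completely self-contained, needing only a one-line coefficient count; the paper's route is heavier for this particular statement but is not wasted effort, since the root-chasing construction (divisibility by a shifted fundamental solution) is exactly what powers Theorems \ref{thm:main2} and \ref{thm:main3} in the commensurable case $m\al+n\be=0$, where your degree count only yields the single linear relation $d_2\al+d_1\be=0$ and no structural information about the solutions themselves.
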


To state the second main result we need some terminology.

\begin{dfn}\label{dfn:lattice-path}A finite non-negative \emph{lattice path} with unit steps to the right and up is a sequence $\pi=\big((x_0,y_0),(x_1,y_1),\ldots,(x_N,y_N)\big)$ where $(x_i,y_i)\in (\Z_{\ge 0})^2$ and $(x_{i+1},y_{i+1})\in\{(x_i+1,y_i), (x_i,y_i+1)\}$ for all $i=0,1,\ldots,N-1$. The point $(x_0,y_0)$ is called the \emph{starting point} and $(x_N,y_N)$ the \emph{end point} of the path $\pi$. A \emph{generalized Dyck path} is a lattice path $\pi$ with starting point $(0,0)$ such that $my_i\ge nx_i$ for all $i$, where $(m,n)$ is the end point of $\pi$. 
We denote by $\mathscr{D}(m,n)$ the set of all generalized Dyck paths with endpoint $(m,n)$.
\end{dfn}
\begin{example} 
Here we draw two lattice paths as defined above, both with starting point $(0,0)$ and end point $(4,5)$. The green one is a generalized Dyck path, because it is always above the blue line $4y=5 x$, while the red one is not a generalized Dyck path.

\vspace{.5cm}
\begin{center}
\begin{tikzpicture}[>=stealth,baseline={([yshift=-.5ex]current bounding box.center)}]
\draw[->] (0,0) to (5,0);
\draw[->] (0,0) to (0,6);
 \foreach \x in {0,1,2,3,4} {

\foreach \y in {0,1,2,3,4,5} {
\fill (\x,\y) circle(2pt); }}
 \foreach \x in {0,1,2,3,4} {
 \draw (\x,0) node [anchor=north] {$\x$};}
  \foreach \y in {1,2,3,4,5} {
 \draw (0,\y) node [anchor=east] {$\y$};}
 
 \draw[-] (0,0) to (0,3) [OliveGreen];
 \draw[-] (0,3) to (2,3) [OliveGreen];
 \draw[-] (2,3) to (2,4) [OliveGreen];
 \draw[-] (2,4) to (3,4)  [OliveGreen];
 \draw[-] (3,4) to (3,5) [OliveGreen];
 \draw[-] (3,5) to (4,5) [OliveGreen];
 \draw[-] (0,0) to (4,5) [blue];
 \draw[-] (0,0) to (1,0) [red];
 \draw[-] (1,0) to (1,2) [red];
 \draw[-] (1,2) to (3,2) [red];
 \draw[-] (3,2) to (3,4) [red];
 \draw[-] (3,4) to (4,4) [red];
 \draw[-] (4,4) to (4,5) [red];
\end{tikzpicture}
\end{center}
\vspace{.5cm}
\end{example}

Below and throughout this paper we will frequently use the standard product $(f,g)(p,q)=(fp,gq)$ in the direct product ring $\C[u]\times\C[u]$.
\begin{dfn}The solution $(p,q)=(1,1)$ is called the \emph{trivial} solution. A non-trivial solution $(p,q)$ to \eqref{eq:main-equation} in which $p$ and $q$ are non-zero and monic is called \emph{irreducible} if whenever $(p,q)=(p_1,q_1)(p_2,q_2)$, where $(p_i,q_i)\in\C[u]\times\C[u]$ solves \eqref{eq:main-equation} for $i\in\{1,2\}$, either $p_1$ and $q_1$ are constant or $p_2$ and $q_2$ are constant.
\end{dfn}
\begin{lem:factorization}
Any non-trivial solution $(p,q)$ to \eqref{eq:main-equation}, in which $p$ and $q$ are non-zero and monic, can be written 
\begin{equation} \label{eq:intro-lem-factorization}
(p,q)=(p_1,q_1)(p_2,q_2)\cdots (p_r,q_r)
\end{equation}
where $r\in\Z_{\ge 0}$ and each $(p_i,q_i)$ is an irreducible solution to \eqref{eq:main-equation}.
\end{lem:factorization}
The following theorem, which is the second main result of the paper, shows that the irreducible solutions of \eqref{eq:main-equation} are in bijective correspondence with the set $\mathscr{D}(m,n)\times\C$ for a certain pair $(m,n)$ uniquely determined by $(\al,\be)$.
\begin{thmB}
Let $\al$ and $\be$ be non-zero complex numbers. Assume that $m\al+n\be=0$ for some relatively prime positive integers $m$ and $n$. For any generalized Dyck path
\[\pi=\big((x_0,y_0),(x_1,y_1),\ldots,(x_{m+n},y_{m+n})\big)\in \mathscr{D}(m,n)\]
and any complex number $\la\in\C$, the pair of polynomials
\begin{equation}
\label{eq:intro-thm2-solutions} 
(p,q)=\left(P^{\pi,\la}_1,\, P^{\pi,\la}_2\right),
\quad
P_\ep^{\pi,\la}(u)=\prod_{i=1}^{m+n} \big(u-(\la+x_i\al+y_i\be)\big)^{\delta_{\ep,1}\delta_{x_i,x_{i-1}}+\delta_{\ep,2}\delta_{y_i,y_{i-1}}},\quad\ep\in\{1,2\},
\end{equation}
is an irreducible solution to \eqref{eq:main-equation} and,
conversely, any irreducible solution to \eqref{eq:main-equation} has the form \eqref{eq:intro-thm2-solutions} for some unique $(\pi,\la)\in \mathscr{D}(m,n)\times\C$.
\end{thmB}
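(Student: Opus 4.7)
The plan is to establish the bijection in four parts: verify that $(\pi,\la)\mapsto (P_1^{\pi,\la},P_2^{\pi,\la})$ produces valid MT solutions, that each is irreducible, that every irreducible solution arises this way, and that the correspondence is injective. For the verification, since each path step is either up or right, exactly one of $\delta_{x_i,x_{i-1}}$ and $\delta_{y_i,y_{i-1}}$ is $1$, giving $P_1^{\pi,\la}(u)P_2^{\pi,\la}(u)=\prod_{i=1}^{m+n}\big(u-(\la+x_i\al+y_i\be)\big)$. For the right-hand side, shifting by $\be$ the $i$-th factor of $P_1^{\pi,\la}$ (an up-step, so $y_i=y_{i-1}+1$) rewrites its argument as $\la+x_{i-1}\al+y_{i-1}\be$; similarly for $P_2^{\pi,\la}$ under the shift by $\al$. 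The two products thus agree except the left contains $u-(\la+m\al+n\be)$ and the right contains $u-\la$, which coincide by the hypothesis $m\al+n\be=0$.

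For irreducibility, I compare leading terms in \eqref{eq:main-equation}: for any monic solution, the coefficient of $u^{\deg p+\deg q-1}$ yields $(\deg p)\be+(\deg q)\al=0$, and together with $\gcd(m,n)=1$ this forces $(\deg p,\deg q)=k(n,m)$ for some $k\in\Z_{\ge 0}$. Since the solutions \eqref{eq:intro-thm2-solutions} have $k=1$, any factorization into two non-trivial monic MT solutions would demand degree-pairs $k_i(n,m)$ with $k_1+k_2=1$ and both $k_i\ge 1$, which is impossible.

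For surjectivity and uniqueness, let $(p,q)$ be an arbitrary irreducible monic solution. Since shifts by $\al,\be$ preserve cosets of $\Z\al+\Z\be$ in $\C$, \eqref{eq:main-equation} decouples across cosets and $(p,q)$ factors accordingly; by Lemma~\ref{lem:factorization}, irreducibility confines all roots to a single coset $\la+\Z\al+\Z\be$. Encoding the multiplicities as $a(x,y),b(x,y):\Z^2/\Z(m,n)\to\Z_{\ge 0}$, the equation becomes the Kirchhoff-type conservation law
\begin{equation}\label{eq:plan-cons}
a(x,y)+b(x,y)=a(x,y+1)+b(x+1,y),
\end{equation}
interpreting $(a,b)$ as a non-negative finitely-supported flow on the directed graph on $\Z^2/\Z(m,n)$ with up- and right-edges. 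An Euler-type cycle decomposition writes $(a,b)$ as a sum of closed walks; any such walk has $N_u$ up-edges and $N_r$ right-edges with $mN_u=nN_r$, and coprimality $\gcd(m,n)=1$ forces $(N_u,N_r)=\ell(n,m)$. Walks with $\ell\ge 2$ admit a non-trivial split via an intermediate-value argument on the function $i\mapsto u_{i+m+n}-u_i$ tracking partial up-step counts, so irreducibility forces $(a,b)$ to be a single minimal cycle of length $m+n$. A minimal cycle visits each of its $m+n$ vertices exactly once, and requiring the starting vertex to be the unique maximum of $v=xn-ym$ is equivalent to the Dyck half-plane condition $my_i\ge nx_i$; rotating to this vertex and shifting $\la$ to place it at $(0,0)$ gives the unique $(\pi,\la)\in\mathscr{D}(m,n)\times\C$ with $(p,q)=(P_1^{\pi,\la},P_2^{\pi,\la})$.

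The main obstacle is the flow-decomposition step: proving that every closed walk of length $\ell(m+n)$ with $\ell\ge 2$ admits a non-trivial split, and that minimal cycles rotated to their $v$-maxima correspond to lattice paths in the first quadrant satisfying the Dyck condition. Both statements rely on careful partial-sum arguments using $\gcd(m,n)=1$, in particular the fact that in the first quadrant the only lattice points with $v=xn-ym=0$ are $(0,0)$ and $(m,n)$, which is what forces intermediate vertices of a minimal Dyck cycle to satisfy the strict inequality $v<0$.
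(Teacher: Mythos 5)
Your proposal is correct in substance, and parts of it take a genuinely different route from the paper. The verification that $(P^{\pi,\la}_1,P^{\pi,\la}_2)$ solves \eqref{eq:main-equation} is the same telescoping computation as in Proposition \ref{prp:fundamental-solution} and Lemma \ref{lem:fundamental-Dyck-solutions}. Your irreducibility argument is more elementary than the paper's: extracting $(\deg p)\be+(\deg q)\al=0$ from the coefficient of $u^{\deg p+\deg q-1}$, hence $(\deg p,\deg q)=k(n,m)$ by coprimality, kills any non-trivial factorization of a degree-$(n,m)$ solution at once, whereas the paper deduces this from Proposition \ref{prp:factorization} together with Corollary \ref{cor:irreducible-fundamental-solution} (each non-constant factor would be divisible by a shifted fundamental solution of combined degree at least $m+n$). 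For the converse, your circulation picture is largely a geometric repackaging of the paper's argument: the conservation law $a(x,y)+b(x,y)=a(x,y+1)+b(x+1,y)$ together with an Euler-type cycle extraction plays the role of the root-chasing proof of Proposition \ref{prp:factorization}; your intermediate-value splitting of closed walks with $\ell\ge 2$ is exactly Lemma \ref{lem:subseq} combined with Proposition \ref{prp:irred}; and rotating a minimal cycle to the unique maximum of $v=nx-my$ (unique because $\gcd(m,n)=1$ makes $v$ injective on $\Z^2/\Z(m,n)$) is an independent proof of the statement the paper imports from Fukukawa in Lemma \ref{lem:fuku}, which is what gives uniqueness of $(\pi,\la)$; this self-containedness, plus the cleaner irreducibility proof, is what your route buys. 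What still needs to be written out, though none of it would fail: the decomposition of a non-negative finitely supported circulation into closed walks; the fact that a closed walk of length exactly $m+n$ is a simple cycle (a repeated intermediate vertex would split it into two closed walks, each of length a positive multiple of $m+n$); and the equivalence of ``start at the $v$-maximum'' with the Dyck condition $my_i\ge nx_i$, together with the observation that the $v$-maximal root is intrinsic to $(p,q)$, which is what pins down $\la$ and hence $\pi$. Finally, the coset-splitting you invoke is the content of Lemma \ref{lem:main3-lemma}, not Lemma \ref{lem:factorization}; since that lemma appears after Theorem \ref{thm:main2} in the paper you should include its short argument directly (it does not depend on the theorem, so there is no circularity).
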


At this point only one issue remains: The factorization \eqref{eq:intro-lem-factorization} into irreducible solutions is not unique (see Example \ref{ex:non-unique-factorization}). That is, the multiset $\{(p_1,q_1),\ldots,(p_r,q_r)\}$ is not uniquely determined by $(p,q)$. 
In the final part of the paper we resolve this issue.

The first step is to group irreducible factors together according to their zeroes modulo $\Z\al+\Z\be$.
\begin{dfn}\label{dfn:integral-sol}
A solution $(p,q)$ to \eqref{eq:main-equation}, in which $p$ and $q$ are non-zero and monic, is called \emph{integral} if every zero of $p(u)q(u)$ belongs to $\Z\al+\Z\be$. More generally, $(p,q)$ is called $c$-\emph{integral}, where $c\in\C/(\Z\al+\Z\be)$, if every zero of $p(u)q(u)$ belongs to the coset $c$.
\end{dfn}

\begin{lemC}
Let $\al$ and $\be$ be non-zero complex numbers.
For any solution $(p,q)$ to \eqref{eq:main-equation}, in which $p$ and $q$ are non-zero and monic, there exists a unique finite subset $C\subseteq\C/(\Z\al+\Z\be)$, and unique $c$-integral solutions $(p_c,q_c)\neq (1,1)$ for $c\in C$ such that
\begin{equation}
(p,q)=\prod_{c\in C} (p_c,q_c).
\end{equation}
\end{lemC}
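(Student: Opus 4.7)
The plan is to construct the decomposition directly by sorting the linear factors of $p$ and $q$ into buckets labeled by the cosets of $\C/(\Z\al+\Z\be)$, and then to use unique factorization in $\C[u]$ to separate the Mazorchuk--Turowska equation coset-by-coset. Explicitly, for each coset $c\in \C/(\Z\al+\Z\be)$ let $p_c$ be the monic polynomial whose linear factors are precisely those $(u-a)$ appearing in $p$ with $a\in c$ (counted with the multiplicity they have in $p$), and define $q_c$ analogously from $q$. Then $p=\prod_c p_c$ and $q=\prod_c q_c$, with $p_c=q_c=1$ for all but finitely many $c$, and each pair $(p_c,q_c)$ is $c$-integral by construction. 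Take $C=\{c:(p_c,q_c)\neq(1,1)\}$, which is finite.

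The main content is the verification that each individual pair $(p_c,q_c)$ satisfies \eqref{eq:main-equation}. Since $\al,\be\in\Z\al+\Z\be$, the translations $u\mapsto u+\al$ and $u\mapsto u+\be$ act as the identity on the quotient $\C/(\Z\al+\Z\be)$; hence $p_c(u+\be)$ and $q_c(u+\al)$ still have all their roots in $c$. Substituting the decompositions into \eqref{eq:main-equation} yields
\[
\prod_{c} p_c(u)\,q_c(u) \;=\; \prod_{c} p_c(u+\be)\,q_c(u+\al).
\]
For distinct cosets $c\neq c'$, a polynomial with all roots in $c$ and a polynomial with all roots in $c'$ are coprime in $\C[u]$. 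Unique factorization in $\C[u]$ therefore forces the coset-$c$ factors on each side to agree, giving $p_c(u)q_c(u)=p_c(u+\be)q_c(u+\al)$ for every $c$, which is exactly what is needed.

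For uniqueness, suppose $(p,q)=\prod_{d\in D}(\tilde p_d,\tilde q_d)$ is any decomposition of the stated form. Then $p=\prod_d\tilde p_d$ is a factorization of $p$ into pairwise coprime polynomials (their root sets lie in distinct cosets), so unique factorization of $p$ forces $\tilde p_d=p_d$; likewise $\tilde q_d=q_d$. The requirement $(\tilde p_d,\tilde q_d)\neq(1,1)$ then pins down $D=C$. The only step with any real content is the coset-wise splitting of the equation in the previous paragraph, and since it reduces to the coprimality of polynomials with root sets in distinct cosets together with the shift-invariance of cosets, I do not anticipate any serious obstacle.
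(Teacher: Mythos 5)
Your proposal is correct and follows essentially the same route as the paper's proof: group the linear factors of $p$ and $q$ by cosets of $\Z\al+\Z\be$, use that the shifts $u\mapsto u+\al$, $u\mapsto u+\be$ preserve cosets, and invoke coprimality of factors with roots in distinct cosets (unique factorization in $\C[u]$) to split \eqref{eq:main-equation} coset-by-coset. Your explicit uniqueness argument is a welcome addition, as the paper leaves that step implicit.
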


Then we show that for each coset $c\in \C/(\Z\al+\Z\be)$, the set $\mathscr{D}(m,n)\times c$ can be equipped with a natural partial order $\preceq$ (see 
Section \ref{sec:ordered-factorizations} for details). 
The third and final main theorem of this paper shows that factorization of $c$-integral solutions into irreducibles is unique if one requires such factorizations to be ordered with respect to $\preceq$. It completes the classification of all solutions to \eqref{eq:main-equation}.

\begin{thmC}
Let $\al$ and $\be$ be non-zero complex numbers. Assume that $m\al+n\be=0$ for some relatively prime positive integers $m$ and $n$. Let $c\in \C/(\Z\al+\Z\be)$ and let $(p,q)$ be a $c$-integral solution to \eqref{eq:main-equation}. Then there exist a unique $k\in\Z_{\ge 0}$ and a unique sequence $\big((\pi_1,\la_1),(\pi_2,\la_2),\ldots,(\pi_k,\la_k)\big)$ of elements of $\mathscr{D}(m,n)\times c$ with
\begin{equation}
(\pi_1,\la_1)\preceq (\pi_2,\la_2)\preceq\cdots\preceq (\pi_k,\la_k)
\end{equation}
such that
\begin{equation}
(p,q)=
\left(P^{\pi_1,\la_1}_1,\, P^{\pi_1,\la_1}_2 \right)
\left(P^{\pi_2,\la_2}_1,\, P^{\pi_2,\la_2}_2 \right)
 \cdots
\left(P^{\pi_k,\la_k}_1,\, P^{\pi_k,\la_k}_2 \right),
\end{equation}
where the irreducible solutions $(P^{\pi,\la}_1,P^{\pi,\la}_2)$ for $(\pi,\la)\in \mathscr{D}(m,n)\times \C$ were defined in \eqref{eq:intro-thm2-solutions}.
\end{thmC}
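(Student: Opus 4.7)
The plan is to prove existence by a bubble-sort argument on arbitrary factorizations into irreducibles, and then establish uniqueness by canonically extracting the $\preceq$-minimum factor of $(p,q)$. By Lemma~\ref{lem:factorization} one starts with any factorization $(p,q) = \prod_{i=1}^k (P_1^{\pi_i,\la_i},P_2^{\pi_i,\la_i})$, where each $(\pi_i,\la_i) \in \mathscr{D}(m,n) \times \C$ by Theorem~\ref{thm:main2}. Since the zeros of the $i$-th factor lie in $\la_i + \Z\al + \Z\be$ and $(p,q)$ is $c$-integral, every $\la_i$ lies in $c$, so the factors are already in $\mathscr{D}(m,n) \times c$ as required.

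\textbf{Existence of an ordered factorization.} I would first establish a local exchange lemma: whenever two adjacent factors $(\pi_i,\la_i),(\pi_{i+1},\la_{i+1})$ violate $(\pi_i,\la_i) \preceq (\pi_{i+1},\la_{i+1})$, their product $(P_1^{\pi_i,\la_i}P_1^{\pi_{i+1},\la_{i+1}},\, P_2^{\pi_i,\la_i}P_2^{\pi_{i+1},\la_{i+1}})$ admits an alternative decomposition into two irreducibles whose resulting pair is either strictly $\preceq$-comparable in the correct direction, or closer to being ordered in a measurable sense. A monovariant built from an inversion count on the chain (or a lexicographic measure on the tuples $(\la_i,\pi_i)$) then forces termination after finitely many local swaps, producing a $\preceq$-ordered factorization with the same product. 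The cylindrical Dyck path picture suggested by the title of the paper should govern exactly when two factors commute freely and when rewriting is required.

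\textbf{Uniqueness.} To show uniqueness of the ordered factorization, I would argue that the $\preceq$-minimum factor is canonically determined by $(p,q)$. The approach is extremality: choose a linear functional on $\Z\al+\Z\be$ compatible with $\preceq$, and pick the zero of $p(u)q(u)$ in $c$ that minimizes it. This extremal zero must be a starting corner of the $\preceq$-minimum factor in any ordered factorization, and comparing the multiplicities of it and nearby zeros in $p$ versus $q$ should recover both $\la_1$ and the beginning of the path $\pi_1$. Once $(\pi_1,\la_1)$ is pinned down, the quotient $(p,q)/(P_1^{\pi_1,\la_1},P_2^{\pi_1,\la_1})$ is again a $c$-integral solution of strictly smaller total degree, and induction on $\deg p + \deg q$ completes the proof.

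\textbf{Main obstacle.} The principal difficulty is the existence step: formulating the local exchange lemma precisely and producing a genuine monovariant that ensures termination. This forces one to analyze in detail how two cylindrical Dyck paths interact inside a single product, and to rewrite such interactions without altering $(p,q)$. Uniqueness, by contrast, should follow cleanly from the extremality argument once one verifies that the class of $c$-integral solutions is closed under dividing out a leading irreducible factor whose parameters lie in $c$, and then applying the inductive hypothesis.
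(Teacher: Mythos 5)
Your overall architecture (factor into irreducibles via Lemma~\ref{lem:factorization} and Theorem~\ref{thm:main2}, reduce to elements of $\mathscr{D}(m,n)\times c$, then sort and peel off a canonical minimal factor) is reasonable, but both of your key steps are left as unproved claims, and they are precisely where the content of the theorem lies. For existence, the ``local exchange lemma'' is the whole point: you must show that if two irreducible factors are \emph{incomparable} (crossing cylindrical Dyck paths), their product can be rewritten as a product of two irreducibles that are comparable --- essentially a crossing resolution taking the south-east and north-west envelopes, together with a verification that each envelope again winds exactly once around the cylinder and hence is of the form $P^{\pi,\la}$ with $(\pi,\la)\in\mathscr{D}(m,n)\times c$. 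You state this only as something you ``would establish,'' give no precise statement, and exhibit no monovariant; note also that a swap can create new inversions with neighboring factors, so termination genuinely needs an argument, and even granting termination, bubble-sorting with respect to a \emph{partial} order does not by itself show the sorted output is independent of the starting factorization --- that burden falls entirely on your uniqueness step.

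For uniqueness, the extremality sketch does not constitute a proof and, as stated, is doubtful. It is unclear what ``a linear functional on $\Z\al+\Z\be$ compatible with $\preceq$'' means (the order $\preceq$ compares points along anti-diagonals of the cylinder, not via a single functional on the coset $c$), and the extremal zero of $p(u)q(u)$ need not be a corner that identifies the minimal factor: zeros are shared, with multiplicity, among several factors, so ``comparing the multiplicities of nearby zeros in $p$ versus $q$'' does not obviously reconstruct either $\la_1$ or $\pi_1$, nor does it prove that the resulting $P^{\pi_1,\la_1}$ actually divides $(p,q)$. The paper's proof replaces this local guess by a global construction: fixing the origin as in Remark~\ref{rem:choice-of-origin}, it builds the candidate first factor greedily on the cylinder (step right whenever the point to the right lies on some path of the given factorization, otherwise step up), and proves simultaneously that this path is $\preceq$ every factor, that $P^{\pi_1,\la_1}$ divides $Q$ (by comparing zeros along the path), and that it is the unique path with these two properties; the quotient is again a solution by Proposition~\ref{prp:elementary-properties}\eqref{part:divisibility-property}, and a single induction on the number of factors yields both existence and uniqueness. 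So your closure-under-division observation is fine, but to make your route work you would still have to prove the exchange lemma with a genuine termination measure and replace the extremal-zero heuristic by a characterization of the minimal factor of the strength the paper establishes.
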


\subsection*{Notation}
For $a,b\in\Z$ the set of integers $x$ satisfying $a\le x\le b$ is denoted by $\iv{a}{b}$. The set of non-zero complex numbers is denoted by $\C^\times$.

\section{Basic properties of solutions}

Let $R=\C[u]$ be the algebra of complex polynomials in one indeterminate $u$ and fix $\al_1, \al_2\in \C$. 
Our goal is to find solutions $(p,q)$ to the equation
\[p(u)q(u)=p(u+\al_2)q(u+\al_1)\]
Note that if $p=0$ or $q=0$ we trivially get a solution.
Hence we exclude those in the following definition.

\begin{dfn} For any $\al_1,\al_2\in\C$ we define the set
\begin{align*}
 S(\alpha_1,\alpha_2)
 =\big\{(p_1,p_2)\in (R\setminus\{0\})^2 &\mid \text{$p_1$, $p_2$ monic and } \text{$p_1(u)p_2(u)=p_1(u+\alpha_2)p_2(u+\alpha_1)$}\big\}.
\end{align*}
\end{dfn}

\begin{prp} Let $R_1$ be the set of monic polynomials in $R\setminus\{0\}$. Then $S(0,0)=R_1\times R_1$, and for any $\al\in\C^\times$, 
$S(0,\al)=\{1\}\times R_1$ and $S(\al,0)=R_1\times\{1\}$.
\end{prp}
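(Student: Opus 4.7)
The plan is to handle the three assertions separately. In each case the defining equation $p_1(u)p_2(u)=p_1(u+\al_2)p_2(u+\al_1)$ degenerates sharply because one or both shift parameters vanish, and everything will reduce to the elementary fact that a complex polynomial invariant under a nonzero shift must be constant.

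First I would dispose of $S(0,0)$. Plugging $\al_1=\al_2=0$ into the defining equation gives $p_1(u)p_2(u)=p_1(u)p_2(u)$, which is tautologically satisfied, so every pair of monic nonzero polynomials lies in $S(0,0)$. This yields $S(0,0)=R_1\times R_1$ immediately.

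Next I would record the key sublemma: if $\al\in\C^\times$ and $f\in\C[u]$ satisfies $f(u)=f(u+\al)$, then $f$ is constant. The cleanest argument is by degrees: if $d=\deg f\ge 1$ and $f(u)=au^d+\text{lower order}$ with $a\ne 0$, then the coefficient of $u^{d-1}$ in $f(u+\al)-f(u)$ is $da\al\ne 0$, contradicting $f(u+\al)-f(u)=0$. (Equivalently, $f$ would have infinitely many roots in the arithmetic progression generated by any single root, which is impossible for a nonzero polynomial.)

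With the sublemma in hand, consider $S(0,\al)$ for $\al\in\C^\times$. The defining equation becomes $p_1(u)p_2(u)=p_1(u+\al)p_2(u)$. Since $p_2\ne 0$, we may cancel it in $\C[u]$ (which is an integral domain) to obtain $p_1(u)=p_1(u+\al)$. The sublemma forces $p_1$ to be constant, and monicity forces $p_1=1$, with $p_2\in R_1$ arbitrary. Conversely, any $(1,p_2)$ with $p_2\in R_1$ solves the equation. Therefore $S(0,\al)=\{1\}\times R_1$. The case $S(\al,0)$ is handled by the symmetric cancellation of $p_1$, giving $p_2(u)=p_2(u+\al)$ and hence $p_2=1$, so $S(\al,0)=R_1\times\{1\}$.

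There is no substantial obstacle here; the only point requiring a small argument is the shift-invariance sublemma, and even this is a one-line degree comparison. The proposition is essentially a sanity check that excludes the degenerate cases $\al_1\al_2=0$ from the subsequent analysis, which will focus on the case $\al_1,\al_2\in\C^\times$.
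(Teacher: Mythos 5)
Your proof is correct, and it simply spells out the routine argument that the paper leaves implicit (the paper's proof is literally ``Obvious''): the case $(0,0)$ is tautological, and in the mixed cases one cancels the unshifted factor in the domain $\C[u]$ and uses that a polynomial invariant under a nonzero shift is constant, which together with monicity gives $1$. No discrepancy with the paper's intent; nothing to add.
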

\begin{proof} Obvious.
\end{proof}

In the rest of the paper we will assume that $\al_1$ and $\al_2$ are non-zero complex numbers.

\begin{prp}\label{prp:elementary-properties}
Let $\al_1,\al_2\in \C^\times$.
\begin{enumerate}[{\rm (a)}]
\item $S(\al_1,\al_2)$ is a multiplicative submonoid of the ring $R\times R$.
\item\label{part:divisibility-property}
$S(\al_1,\al_2)$ satisfies the following divisibility property: If $(p_1,p_2)\in S(\al_1,\al_2)$ and $(q_1,q_2)\in R\times R$ are such that $(p_1,p_2)(q_1,q_2)\in S(\al_1,\al_2)$ then $(q_1,q_2)\in S(\al_1,\al_2)$.
\item For each $\ga\in \C^\times$, there is a bijection $S(\al_1,\al_2)\to S(\al_1/\ga,\al_2/\ga)$ given by
\begin{equation}\label{eq:rescaling-bijection}
\big(p_1(u),p_2(u)\big)\mapsto \big(\ga^{-d_1}p_1(\ga u), \ga^{-d_2}p_2(\ga u)\big),
\end{equation}
where $d_i=\deg(p_i)$.
\end{enumerate}
\end{prp}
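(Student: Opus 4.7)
The plan is to verify each of the three assertions by direct substitution into the defining functional equation
\[p_1(u)p_2(u)=p_1(u+\al_2)p_2(u+\al_1)\]
of $S(\al_1,\al_2)$, exploiting the fact that $R=\C[u]$ is an integral domain so common non-zero polynomial factors can be cancelled at will.

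For part (a), I would first note that $(1,1)$ trivially lies in $S(\al_1,\al_2)$ and serves as the multiplicative identity of $R\times R$. For closure, suppose $(p_1,p_2)$ and $(p_1',p_2')$ both lie in $S(\al_1,\al_2)$. A product of monic polynomials is monic, so $(p_1p_1',p_2p_2')$ remains a pair of non-zero monic polynomials; multiplying the two instances of the functional equation side by side then yields precisely the identity required for the product pair.

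For part (b), write the product as $(p_1q_1,p_2q_2)$. Since $p_1$ and $p_1q_1$ are non-zero monic polynomials, the leading coefficient of $q_1$ is forced to equal $1$, hence $q_1$ is non-zero and monic; the same argument applies to $q_2$. To derive the functional equation for $(q_1,q_2)$, I would substitute the identity for $(p_1,p_2)$ into the identity for $(p_1q_1,p_2q_2)$ and then cancel the common non-zero factor $p_1(u+\al_2)p_2(u+\al_1)$, using that $\C[u]$ is an integral domain.

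For part (c), I would check that the map~\eqref{eq:rescaling-bijection} sends non-zero monic polynomials to non-zero monic polynomials — the normalization factor $\ga^{-d_i}$ is chosen precisely so that the leading coefficient becomes $1$ — and that the functional equation of $S(\al_1,\al_2)$ gets transported to the corresponding equation for $S(\al_1/\ga,\al_2/\ga)$ under the substitution $u\mapsto\ga u$. The two-sided inverse is the analogous rescaling with $\ga$ replaced by $\ga^{-1}$, and a direct composition shows that the two maps undo each other. The proof is essentially routine substitution; the only subtlety worth flagging is in part (b), where monicity of $q_i$ must be \emph{derived} from the monicity of $p_i$ and $p_iq_i$ rather than assumed at the outset.
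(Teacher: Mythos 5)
Your proposal is correct and follows essentially the same route as the paper: part (a) by multiplying the two functional equations, part (b) by cancelling the common non-zero factor using that $R=\C[u]$ is a domain (the paper leaves this as "straightforward to verify"), and part (c) by the substitution $u\mapsto\ga u$ with the inverse given by replacing $\ga$ with $\ga^{-1}$. Your explicit derivation of the monicity of $q_1,q_2$ in part (b) is a welcome detail the paper omits, but it does not change the argument.
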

\begin{proof}

(a) The multiplicative unit $(1,1)$ of $R\times R$ belongs to $S(\al_1,\al_2)$. If $(p,q), (p',q')\in S(\al_1,\al_2)$ then $pp'$ and $qq'$ are monic and 
\begin{align*}
p(u)p'(u)q(u)q'(u)&=p(u)q(u)p'(u)q'(u) \\
&=p(u+\al_2)q(u+\al_1)p'(u+\al_2)q'(u+\al_1)\\ 
&=p(u+\al_2)p'(u+\al_2)q(u+\al_1)q'(u+\al_1),
\end{align*}
hence $(pp',qq')\in S(\al_1,\al_2)$. Since $(p,q)(p',q')=(pp',qq')$ by definition in the ring $R\times R$, this shows that $S(\al_1,\al_2)$ is a multiplicatively closed subset of $R\times R$.

(b) Straightforward to verify, using that $R$ is a domain.

(c) Suppose $(p_1,p_2)\in S(\al_1,\al_2)$. Then
\begin{align*}
\ga^{-d_1}p_1(\ga u)\ga^{-d_2}p_2(\ga u) &=
\ga^{-d_1}p_1(\ga u+\al_2)\ga^{-d_2}p_2(\ga u+\al_1) \\
&=\ga^{-d_1}p_1\left(\ga\left(u+\textstyle\frac{\al_2}{\ga}\right)\right)\ga^{-d_2}p_2\left(\ga\left(u+\textstyle\frac{\al_1}{\ga}\right)\right)
\end{align*}
which shows that $\big(\ga^{-d_1}p(\ga u), \ga^{-d_2} p(\ga u)\big)\in S(\al_1/\ga,\al_2/\ga)$.
The inverse is obtained by replacing $\ga$ by $\ga^{-1}$ in
\eqref{eq:rescaling-bijection}.
\end{proof}

We can now prove that any non-trivial solution in $S(\al_1,\al_2)$ can be written as a product of irreducible solutions, as stated in the introduction.
\begin{lem}\label{lem:factorization}
Any non trivial solution $(p,q)$ to \eqref{eq:main-equation}, in which $p$ and $q$ are non-zero and monic, can be written as
\begin{equation} \label{eq:lem-factorization}
(p,q)=(p_1,q_1)(p_2,q_2)\cdots (p_r,q_r)
\end{equation}
where $r\in\Z_{\ge 0}$ and each $(p_i,q_i)$ is an irreducible solution to \eqref{eq:main-equation}.
\end{lem}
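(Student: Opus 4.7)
The plan is to proceed by strong induction on the non-negative integer $n = \deg(p) + \deg(q)$. Since $(p,q)$ is a non-trivial solution with both entries monic, we have $n \geq 1$ in any case of interest; the trivial solution corresponds to $r = 0$ (the empty product) and needs no argument.

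For the inductive step, if $(p,q)$ is itself irreducible, take $r = 1$ and the conclusion is immediate. Otherwise, by negating the definition of irreducibility, there exists a factorization $(p,q) = (p_1, q_1)(p_2, q_2)$ in $\C[u] \times \C[u]$ in which both $(p_1, q_1)$ and $(p_2, q_2)$ solve \eqref{eq:main-equation}, and neither pair consists of two constant polynomials. The equation \eqref{eq:main-equation} is invariant under rescaling each entry by a non-zero constant, so we may absorb scalars and assume all of $p_1, p_2, q_1, q_2$ are monic; under this normalization ``constant'' becomes ``equal to $1$'', and hence both $(p_1, q_1)$ and $(p_2, q_2)$ are non-trivial monic solutions.

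From $p = p_1 p_2$, $q = q_1 q_2$, and the non-triviality of each factor, we obtain $\deg(p_i) + \deg(q_i) \geq 1$ for $i = 1, 2$, together with $\deg(p_1) + \deg(q_1) + \deg(p_2) + \deg(q_2) = n$. Therefore $\deg(p_i) + \deg(q_i) < n$ for each $i$, and the induction hypothesis applies to both factors. Concatenating the two resulting irreducible factorizations yields the desired expression \eqref{eq:lem-factorization} for $(p,q)$.

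There is essentially no substantive obstacle: the only point requiring care is the monic normalization in the second paragraph, which uses that $\C[u]$ is a UFD and that \eqref{eq:main-equation} is homogeneous under independent scaling of $p$ and $q$. Note that Proposition \ref{prp:elementary-properties}(\ref{part:divisibility-property}) is not actually needed here, since the failure of irreducibility directly supplies both factors as solutions; the divisibility property will, however, be essential later when one wants to recognize sub-solutions coming from only one side of a candidate factorization.
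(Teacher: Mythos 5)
Your proof is correct and follows essentially the same route as the paper: negate irreducibility to split a reducible solution into two non-constant factor solutions, normalize to monic (the paper implicitly takes the factors in the monoid of monic solutions), and recurse/induct on the total degree, which strictly drops. The only difference is cosmetic — you phrase the descent as strong induction on $\deg(p)+\deg(q)$ and spell out the rescaling step, whereas the paper runs the same argument as a terminating repeated factorization bounded by $\deg(pq)$.
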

\begin{proof}
Let $(p,q)\in S(\al_1,\al_2)$. If $(p,q)$ is irreducible we are done. If not, it can be factored as $(p',q')\cdot(p'',q'')$ where $(p',q'),(p'',q'')\in S(\al_1,\al_2)$ and $0<\deg(p'q'),\deg(p''q'')<\deg(pq)$. Repeat this argument for each of the factors $(p',q')$ and $(p'',q'')$. After finitely many (bounded by $\deg(pq)$) steps the process will terminate, at which point we have obtained the required factorization.
\end{proof}

\section{The free monoid on two generators}

In this section we introduce some notation that will be used in later sections and give a description of certain binary sequences.

Let $\mathsf{M}$ be the free monoid on two generators, regarded as the set of finite sequences $\underline{i}=(i_1i_2\cdots i_N)$ where $i_j\in \{1,2\}$ for all $j$. The product of two sequences is concatenation, denoted by juxtaposition. The \emph{length} of $\un{i}=(i_1i_2\cdots i_N)$, denoted $\ell(\un{i})$, is defined to be $N$. The unique sequence of length $0$ is denoted by $\emptyset$.
For $\un{i}=(i_1i_2\cdots i_N)\in\mathsf{M}$ we let
\begin{equation}\label{eq:ellm}\ell_m(\un{i})=\#\big\{k\in\iv{1}{N}\mid i_k=m\big\},\qquad m=1,2,\end{equation}
denote the number of occurrences of $m$ in the sequence $\un{i}$.
The infinite cyclic group $\Z$ acts on $\mathsf{M}$ by shifts: $1\cdot (i_1i_2\cdots i_N)=(i_2i_3\cdots i_Ni_1)$. The $\Z$-orbit in $\mathsf{M}$ containing $\un{i}$ is denoted by $\Z\cdot \un{i}$.
For each $\al_1,\al_2\in \C^\times$ define a submonoid $\mathsf{M}(\al_1,\al_2)$ of $\mathsf{M}$ as follows:
\begin{equation}\label{eq:Mal1al2-def}
\mathsf{M}(\al_1,\al_2)=\left\{\un{i}=(i_1i_2\cdots i_N)\in \mathsf{M}~\Big|~\textstyle \sum\limits_{k=1}^N \al_{i_k} =0 \right\} .
\end{equation}
We say that $\mathsf{M}(\al_1,\al_2)$ is \emph{trivial} if $\mathsf{M}(\al_1,\al_2)=\{\emptyset\}$.
Note that each submonoid $\mathsf{M}(\al_1,\al_2)$ is invariant under the $\Z$-action on $\mathsf{M}$.

\begin{dfn}
A sequence $\un{i}\in\mathsf{M}(\al_1,\al_2)$ 
 is called \emph{cyclically reducible} if there exist $n\in\Z$ and $\un{j},\un{k}\in\mathsf{M}(\al_1,\al_2)$ with $\un{j},\un{k}\neq\emptyset$ such that \[n\cdot\un{i}=\un{j}\un{k}.\]
A sequence $\un{i}\in\mathsf{M}(\al_1,\al_2)$ with $\un{i}\neq\emptyset$ which is not cyclically reducible is called \emph{cyclically irreducible}. Note that $\emptyset$ is not considered cyclically irreducible.
We let 
\[\mathsf{Irr}(\al_1,\al_2)=\left\{ \un{i}\in\mathsf{M}(\al_1,\al_2)~|~\un{i}\text{ is cyclically irreducible}\right\}.\]
Note that by definition $\mathsf{Irr}(\al_1,\al_2)$ is invariant under the $\Z$-action on $\mathsf{M}(\al_1,\al_2)$.
\end{dfn}

\begin{lem}\label{lem:al1al2}
Let $\al_1,\al_2\in \C^\times$. Then $\mathsf{M}(\al_1,\al_2)$ is non-trivial if and only if there exist relatively prime positive integers $m$ and $n$ such that 
\begin{equation}\label{eq:z-depend}m\al_1+n\al_2=0.\end{equation}
If this holds, then the pair $(m,n)$ is uniquely determined by $(\al_1,\al_2)$, and furthermore, for all $\un{i}\in\mathsf{M}(\al_1,\al_2)$ there exists a non-negative integer $d$ such that
\begin{align*}
\ell_1(\un{i})&=d\cdot m,\\
\ell_2(\un{i})&=d\cdot n.
\end{align*}
In particular, $m+n$ divides the length of every element of $\mathsf{M}(\al_1,\al_2)$.
\end{lem}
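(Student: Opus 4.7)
The plan is to reduce the defining equation in \eqref{eq:Mal1al2-def} to a single linear relation over $\Z$. Since each $i_k\in\{1,2\}$, the sum $\sum_{k=1}^N \al_{i_k}$ collapses to $\ell_1(\un{i})\al_1 + \ell_2(\un{i})\al_2$, so membership in $\mathsf{M}(\al_1,\al_2)$ depends on $\un{i}$ only through the pair $(\ell_1(\un{i}),\ell_2(\un{i}))\in(\Z_{\ge 0})^2$. The entire lemma then reduces to a description of the set of non-negative integer solutions $(a,b)$ of $a\al_1+b\al_2=0$.

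For the equivalence, I would observe that because $\al_1,\al_2\in\C^\times$, any such solution with $a=0$ forces $b=0$ and vice versa; hence a nonzero solution must have both $a,b>0$. This shows that $\mathsf{M}(\al_1,\al_2)$ is non-trivial if and only if $-\al_1/\al_2\in\Q_{>0}$. Writing this positive rational in lowest terms as $-\al_1/\al_2=n/m$ with coprime $m,n\in\Z_{>0}$ yields $m\al_1+n\al_2=0$, giving existence of $(m,n)$. Uniqueness is immediate: if $(m',n')$ is another coprime positive pair with $m'\al_1+n'\al_2=0$, then $m'/n'=m/n$ as rationals, forcing $(m',n')=(m,n)$ by coprimality.

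Finally, for the divisibility statement, any $(a,b)\in(\Z_{\ge 0})^2$ satisfying $a\al_1+b\al_2=0$ also satisfies $an=bm$; since $\gcd(m,n)=1$, this forces $(a,b)=d(m,n)$ for some $d\in\Z_{\ge 0}$, the case $(a,b)=(0,0)$ corresponding to $d=0$. Applying this to $(a,b)=(\ell_1(\un{i}),\ell_2(\un{i}))$ produces the claimed parametrization, and summing yields $\ell(\un{i})=d(m+n)$. There is no real obstacle in this proof: the whole argument is elementary integer linear algebra, with the only mild subtlety being the sign bookkeeping when passing from $-\al_1/\al_2\in\Q_{>0}$ to a positive-coefficient relation $m\al_1+n\al_2=0$.
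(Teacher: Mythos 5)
Your proposal is correct and follows essentially the same route as the paper: both reduce membership in $\mathsf{M}(\al_1,\al_2)$ to the single relation $\ell_1(\un{i})\al_1+\ell_2(\un{i})\al_2=0$, obtain $(m,n)$ by passing to lowest terms, and realize the converse by a sequence with prescribed numbers of $1$'s and $2$'s. The only (immaterial) difference is that you settle uniqueness and the divisibility $(\ell_1,\ell_2)=d(m,n)$ by the elementary coprimality argument $an=bm$, whereas the paper invokes a generator of the infinite cyclic group $\Z\al_1+\Z\al_2$.
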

\begin{proof} Suppose there exists $\un{i}\in\mathsf{M}(\al_1,\al_2)$ with $\un{i}\neq\emptyset$. Let $N=\ell(\un{i})$. Then $\sum_{k=1}^N \al_{i_k}=0$, hence $m'\al_1+n'\al_2=0$ where $m'=\ell_1(\un{i})$ and $n'=\ell_2(\un{i})$. 
In this case, it is clear that $m=m'/\GCD(m',n')$ and $n= n'/\GCD(m',n')$ satisfy the required conditions. Uniqueness follows from the fact that $(m,n)=(|\al_2/\ga|,|\al_1/\ga|)$ where $\ga$ is a generator of the infinite cyclic group $\Z\al_1+\Z\al_2$.
Conversely, assume that $c_1\al_1+c_2\al_2=0$ for some positive integers $c_1,c_2$. 
Then the sequence $\un{i}=(11\cdots 122\cdots 2)$, where we have $c_k$ occurrences of $k$ for $k=1,2$, belongs to $\mathsf{M}(\al_1,\al_2)$.
\end{proof}

\begin{dfn}
Let $\underline{i}=(i_1i_2\cdots i_N)\in\mathsf{M}$.
An \emph{initial subsequence} of $\un{i}$ is a sequence in $\mathsf{M}$ of the form $(i_1i_2\cdots i_r)$ for some $r\in\iv{0}{N}$.
A \emph{cyclically consecutive subsequence} of $\underline{i}$ is an initial subsequence of $n\cdot \un{i}$ for some $n\in\Z$.
\end{dfn}

To describe all cyclically irreducible elements of $\mathsf{M}(\al_1,\al_2)$ we need the following lemma.

\begin{lem}\label{lem:subseq}
Let $\underline{i}=(i_1i_2\cdots i_N)\in\mathsf{M}$ and let $l_m=\ell_m(\un{i})$, as defined in \eqref{eq:ellm}, for $m=1,2$. We define $d=\GCD(l_1,l_2)$ and $s_m=\frac{l_m}{d}$ for $m=1,2$. Then there exists a cyclically consecutive subsequence $\underline{i}'=(i_n,i_{n+1},\ldots,i_{n+s_1+s_2-1})$ of $\underline{i}$ with the property that $\ell_1(\underline{i}')=s_1$ (and consequently $\ell_2(\un{i}')=s_2$). 
\end{lem}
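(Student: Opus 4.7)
My plan is to prove this via a \emph{discrete intermediate value argument} on a sliding window of length $s_1 + s_2$ around the cycle.

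For each $n \in \iv{0}{N-1}$, let $\underline{i}^{(n)}$ denote the initial subsequence of length $s_1 + s_2$ of $n\cdot \underline{i}$, that is, the cyclically consecutive subsequence $(i_{n+1}, i_{n+2}, \ldots, i_{n+s_1+s_2})$ (indices read modulo $N$). Define
\[
w(n) = \ell_1\bigl(\underline{i}^{(n)}\bigr),
\]
the number of $1$'s appearing in this window. The goal is to find some $n$ with $w(n) = s_1$.

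First I would compute the average of $w$ over $n \in \iv{0}{N-1}$. Since each position $k$ of the original sequence lies in exactly $s_1 + s_2$ of the $N$ windows (namely those starting at the $s_1+s_2$ cyclic positions preceding it), one obtains
\[
\sum_{n=0}^{N-1} w(n) \;=\; l_1 \cdot (s_1 + s_2) \;=\; d\, s_1 (s_1+s_2) \;=\; N\, s_1,
\]
so the average value of $w$ is exactly the integer $s_1$. Next I would note that passing from the window starting at $n$ to the one starting at $n+1$ (cyclically) deletes one entry from the front and appends one entry to the back, each being a $1$ or a $2$; hence
\[
|w(n+1) - w(n)| \le 1.
\]

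Now a discrete intermediate value argument finishes the proof. If $w(n) = s_1$ for some $n$, we are done. Otherwise, since the average equals $s_1$, the values $w(n)$ cannot all lie strictly on one side of $s_1$, so there must exist $n_-$ and $n_+$ with $w(n_-) < s_1 < w(n_+)$. Traversing the cyclic sequence $w(0), w(1), \ldots, w(N-1)$ from $n_-$ to $n_+$ with increments in $\{-1,0,+1\}$, the value $s_1$ must be attained at some intermediate index $n$, whose corresponding window $\underline{i}^{(n)}$ is the desired cyclically consecutive subsequence (with $\ell_2(\underline{i}^{(n)}) = (s_1+s_2) - s_1 = s_2$ automatically). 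I do not expect any real obstacle here; the only point to be careful about is the bookkeeping that each symbol of $\underline{i}$ is counted in precisely $s_1+s_2$ windows, which relies on $N = d(s_1+s_2) \ge s_1+s_2$ so that windows genuinely wrap around without over-counting.
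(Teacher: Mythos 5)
Your proof is correct and follows essentially the same route as the paper: both compute that the window counts $w(n)$ sum to $l_1(s_1+s_2)=Ns_1$ (so the average is exactly $s_1$), observe that consecutive cyclic windows change the count by at most one, and conclude by a discrete intermediate value argument. The only cosmetic difference is that the paper phrases the averaging step via the concatenated sequence $\un{I}=\un{i}_S(\un{i}^1_S)\cdots(\un{i}^{N-1}_S)$, which is the same counting you carry out directly.
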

\begin{proof}Let $S=\iv{1}{s_1+s_2}\subset \iv{1}{N}$ and consider the initial subsequence $\un{i}_S=(i_1i_2\cdots i_{s_1+s_2})\subset \un{i}$. For each $n\in\Z$, we let $\un{i}_S^n$ be the initial subsequence of length $s_1+s_2$ of $n\cdot\un{i}$. Notice that for each $n\in\Z$, the quantities $\ell_1(\un{i}_S^n)$ and $\ell_1(\un{i}_S^{n+1})$ differ at most by one. If $p:=\ell_1(\un{i}_S)=s_1$, we let $\underline{i}'=\underline{i}_S$ and the statement is proved. Now suppose that $p> s_1$. Consider the concatenated sequence $\un{I}=\un{i}_S(\un{i}^1_S)\cdots(\un{i}^{N-1}_S)$.  Since each index $j\in\iv{1}{N}$ appears exactly $s_1+s_2$ times in $\underline{I}$, we have $\ell_1(\underline{I})=(s_1+s_2)l_1$.
Hence 
$$\ell_1(\un{i}_S)+\ell_1(\un{i}^1_S)+\cdots+\ell_1(\un{i}^{N-1}_S)=(s_1+s_2)l_1=(s_1+s_2)ds_1=(l_1+l_2)s_1=Ns_1.$$

But since $\ell_1(\un{i}_S)=p$, with $p>s_1$, there exists $r\in\iv{0}{N-1}$ such that $\ell_1(\un{i}^r_S)=q$, with $q<s_1$. Since, as we observed, the quantity $\ell_1(\un{i}^n)$ can either be the same as $\ell_1(\un{i}^{n+1})$ or differ by one, it follows that for a choice of shift $0\leq r'\leq r$ we have necessarily that $\ell_1(\un{i}^{r'}_S)=s_1$, therefore $\underline{i}'=\underline{i}^{r'}_{S}$ is the cyclically consecutive subsequence we were looking for. If we had originally assumed that $p<s_1$, for the same reasons we would then have to have a $q>s_1$ and the conclusion would also follow.
\end{proof}

We can now prove the main result in this section.

\begin{prp}\label{prp:irred}Let $m\al_1+n\al_2=0$ for $m,n$ relatively prime positive integers and let $\underline{i}\in\mathsf{M}(\al_1,\al_2)$ be a non-trivial sequence. Then $\un{i}$ is cyclically irreducible if and only if $\ell(\un{i})=m+n$.
\end{prp}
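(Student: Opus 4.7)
The plan is to prove the two implications separately, using Lemma \ref{lem:al1al2} for lengths and Lemma \ref{lem:subseq} for the harder direction.

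By Lemma \ref{lem:al1al2}, any $\underline{i}\in\mathsf{M}(\al_1,\al_2)$ satisfies $\ell_1(\underline{i})=dm$ and $\ell_2(\underline{i})=dn$ for some $d\in\Z_{\ge 0}$, so $\ell(\underline{i})=d(m+n)$. In particular, the length of every element of $\mathsf{M}(\al_1,\al_2)$ is either $0$ or at least $m+n$. I will use this divisibility observation throughout.

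For the easy direction (sufficiency), I would prove the contrapositive: suppose $\underline{i}$ is cyclically reducible, so that $n\cdot\underline{i}=\underline{j}\underline{k}$ for some $n\in\Z$ and $\underline{j},\underline{k}\in\mathsf{M}(\al_1,\al_2)\setminus\{\emptyset\}$. By the observation above, both $\ell(\underline{j})$ and $\ell(\underline{k})$ are at least $m+n$, hence $\ell(\underline{i})=\ell(\underline{j})+\ell(\underline{k})\ge 2(m+n)>m+n$.

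For the nontrivial direction (necessity), I would argue by contrapositive: assume $\ell(\underline{i})=d(m+n)$ with $d\ge 2$, and produce a cyclic splitting. With $l_1=dm$, $l_2=dn$, one has $\GCD(l_1,l_2)=d$ (since $\gcd(m,n)=1$), so the reduced integers in Lemma \ref{lem:subseq} are $s_1=m$ and $s_2=n$. That lemma then supplies a cyclically consecutive subsequence $\underline{i}'$ of $\underline{i}$ of length $m+n$ with $\ell_1(\underline{i}')=m$ and $\ell_2(\underline{i}')=n$; the identity $m\al_1+n\al_2=0$ places $\underline{i}'$ in $\mathsf{M}(\al_1,\al_2)$. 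Writing $n\cdot\underline{i}=\underline{i}'\underline{k}$ for the appropriate shift, the complement $\underline{k}$ is automatically in $\mathsf{M}(\al_1,\al_2)$, since the total sum of $\al$'s over $n\cdot\underline{i}$ equals that over $\underline{i}$, namely $0$, and it is zero over $\underline{i}'$. Because $d\ge 2$, $\ell(\underline{k})=(d-1)(m+n)>0$, so both pieces are nonempty, witnessing cyclic reducibility.

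The only real subtlety is invoking Lemma \ref{lem:subseq} with the right parameters and making sure the complementary sequence $\underline{k}$ inherits membership in $\mathsf{M}(\al_1,\al_2)$; this follows formally from the identity $\sum_{k=1}^{N}\al_{i_k}=0$ for $\underline{i}$, which is preserved under the cyclic shift and splits additively along concatenations. I do not expect any genuine obstacle in this argument; it is essentially bookkeeping once the two preceding lemmas are in hand.
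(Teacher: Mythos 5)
Your proposal is correct and follows essentially the same route as the paper: Lemma \ref{lem:al1al2} gives the length divisibility used for the easy (contrapositive) direction, and Lemma \ref{lem:subseq} with $s_1=m$, $s_2=n$ produces the cyclically consecutive subsequence of length $m+n$ whose complement is automatically in $\mathsf{M}(\al_1,\al_2)$, exactly as in the paper's argument. The only cosmetic remark is that you reuse the letter $n$ both for the cyclic shift and for the integer in $m\al_1+n\al_2=0$; the paper uses $k$ for the shift to avoid this clash.
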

\begin{proof}First of all, suppose that $\un{i}$ is cyclically reducible. For some $k\in\Z$ we have $k\cdot\un{i}=\un{i}'\un{i}''$ for two non trivial sequences $\un{i}',\un{i}''\in\mathsf{M}(\al_1,\al_2)$. By Lemma \ref{lem:al1al2}, $m+n$ divides both $\ell(\un{i}')$ and $\ell(\un{i}'')$, hence
$$\ell(\un{i})=\ell(k\cdot\un{i})=\ell(\un{i}')+\ell(\un{i}'')\geq 2(m+n).$$
Now suppose that $\un{i}\in\mathsf{M}(\al_1,\al_2)$ is such that $\ell(\un{i})>m+n$. By Lemma \ref{lem:al1al2}, there is an integer $d>1$ such that $\ell_2(\un{i})=d n$ and $\ell_1(\un{i})=d m$. But then Lemma \ref{lem:subseq} implies that there is a cyclically consecutive subsequence $\un{i}'$ of $\un{i}$ such that $\un{i}'\in\mathsf{M}(\al_1,\al_2)$ and $\ell(\un{i}')=m+n$. Write then $k\cdot \un{i}=\un{i}'\un{i}''$ for some other subsequence $\un{i}''$. By length considerations, $\un{i}''$ is non-trivial and, since $\un{i},\un{i}'\in\mathsf{M}(\al_1,\al_2)$, it follows that $\un{i}''\in\mathsf{M}(\al_1,\al_2)$ which shows  that $\un{i}$ is cyclically reducible.
\end{proof}

We can think of sequences in $\mathsf{M}$ as lattice paths in a very natural way. Let $\mathscr{P}_0$ be the set of all lattice paths as in Definition \ref{dfn:lattice-path} with starting point $(0,0)$.
\begin{dfn}\label{dfn:seq-to-path}We define a map $\mathsf{M}\to \mathscr{P}_0$ as follows. If $\un{i}=(i_1,\ldots,i_N)\in\mathsf{M}$, then $\un{i}\mapsto \pi(\un{i})\in\mathscr{P}_0$ where
$\pi(\un{i})=\left((x_0,y_0),(x_1,y_1),\ldots,(x_N,y_N)\right)$ is the path with $(x_0,y_0)=(0,0)$ and $(x_j,y_j)=(x_{j-1}+\delta_{1,i_j},y_{j-1}+\delta_{2,i_j})$ for all $j=1,\ldots, N$ defined recursively.
Conversely, we have a map going the other way $\mathscr{P}_0\to \mathsf{M}$, defined by $\pi\mapsto \un{i}(\pi)=(i_1,\ldots,i_N)$ where we set $i_j=1+\delta_{1,y_j-y_{j-1}}$ for all $j=1,\ldots,N$.
\end{dfn}
\begin{lem}The two maps of Definition \ref{dfn:seq-to-path} are inverses of each other, hence give a bijection between $\mathsf{M}$ and $\mathscr{P}_0$.
\end{lem}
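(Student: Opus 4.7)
The plan is to verify directly that the two compositions are identities, which amounts to unwinding the definitions. Both maps are defined step-by-step in terms of $j=1,\ldots,N$, so it suffices to check that each step-correspondence is a mutual inverse; starting points are automatically preserved since both are $(0,0)$ by convention.

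First I would check the composition $\un{i}\mapsto \pi(\un{i})\mapsto \un{i}(\pi(\un{i}))$. Given $\un{i}=(i_1,\ldots,i_N)\in\mathsf{M}$, the path $\pi(\un{i})$ has $y_j-y_{j-1}=\delta_{2,i_j}$ by construction. Applying the second map then produces the sequence with $j$-th entry equal to $1+\delta_{1,\delta_{2,i_j}}$. A two-case check on $i_j\in\{1,2\}$ shows this equals $i_j$: if $i_j=1$ then $\delta_{2,i_j}=0$ and $1+\delta_{1,0}=1$; if $i_j=2$ then $\delta_{2,i_j}=1$ and $1+\delta_{1,1}=2$. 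So the composition is the identity on $\mathsf{M}$.

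Next I would verify the composition $\pi\mapsto \un{i}(\pi)\mapsto \pi(\un{i}(\pi))$. Writing $\pi=\bigl((x_0,y_0),\ldots,(x_N,y_N)\bigr)\in\mathscr{P}_0$, we have $(x_0,y_0)=(0,0)$, matching the required starting point of $\pi(\un{i}(\pi))$. For each $j\in\iv{1}{N}$, Definition \ref{dfn:lattice-path} forces exactly one of $y_j-y_{j-1}=1$ (with $x_j=x_{j-1}$) or $x_j-x_{j-1}=1$ (with $y_j=y_{j-1}$) to hold. In the first case $i_j(\pi)=2$, so the recursion in Definition \ref{dfn:seq-to-path} gives $(x_j',y_j')=(x_{j-1}',y_{j-1}'+1)$; in the second case $i_j(\pi)=1$, yielding $(x_j',y_j')=(x_{j-1}'+1,y_{j-1}')$. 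In either case the step matches that of $\pi$, so by induction on $j$ the two paths coincide.

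Neither step contains a genuine obstacle: the only thing to keep straight is the asymmetric convention $i_j=1$ for a rightward ($x$) step and $i_j=2$ for an upward ($y$) step, which is hardwired through the Kronecker deltas. Once both compositions are shown to fix each sequence/path pointwise, we conclude that the two maps are mutually inverse bijections between $\mathsf{M}$ and $\mathscr{P}_0$.
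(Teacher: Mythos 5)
Your verification is correct and amounts to carrying out in detail the direct check that the paper dismisses as obvious: both compositions are shown to be the identity by unwinding the Kronecker-delta conventions step by step. This is the same (only fully written-out) argument, so there is nothing further to reconcile with the paper's proof.
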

\begin{proof}This is obvious.
\end{proof}
We can then define an action of $\Z$ on $\mathscr{P}_0$ by requiring it to commute with the bijection. That is,
\[ k\cdot \pi(\un{i})=\pi(k\cdot \un{i}) \qquad\forall~ k\in\Z,~\un{i}\in\mathsf{M}. \]
Under the bijection of Definition \ref{dfn:seq-to-path}, $\mathsf{M}(\al_1,\al_2)$ corresponds to the subset of $\mathscr{P}_0$ consisting of lattice paths with endpoints on the line $my=nx$, where $m,n$ are relatively prime positive integers with $m\al_1+n\al_2=0$. Moreover, $\mathsf{Irr}(\al_1,\al_2)$ corresponds to the subset of those where the endpoint is $(m,n)$. 
Recall that $\mathscr{D}(m,n)$ is the set of generalized Dyck paths (see Definition \ref{dfn:lattice-path}) with endpoint $(m,n)$.

\begin{lem}[{\cite[Lemma~2.3]{Fukukawa2013pre}}]\label{lem:fuku} Let $\al_1,\al_2\in\C^\times$ such that $m\al_1+n\al_2=0$ for relatively prime positive integers $m,n$. Then each $\Z$-orbit in $\mathsf{Irr}(\al_1,\al_2)$ contains exactly one sequence $\un{i}$ such that $\pi(\un{i})\in\mathscr{D}(m,n)$.
\end{lem}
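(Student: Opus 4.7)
The plan is to track how cyclic irreducibility manifests in the lattice-path picture via a single auxiliary function, the signed deviation from the line $my=nx$.

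First, given $\un{i}\in\mathsf{Irr}(\al_1,\al_2)$ with $\pi(\un{i})=\bigl((x_0,y_0),\ldots,(x_{m+n},y_{m+n})\bigr)$ (length $m+n$ by Proposition \ref{prp:irred}), define the deviation $d_j=my_j-nx_j$ for $0\le j\le m+n$. Each step of type $1$ contributes $-n$ and each step of type $2$ contributes $+m$ to $d$, so $d_0=d_{m+n}=0$ and $d$ extends periodically with period $m+n$ when we continue the step sequence cyclically. A direct translation of the definitions shows that the path of the shifted sequence $k\cdot\un{i}$ has deviation $d'_j=d_{j+k}-d_k$, so $\pi(k\cdot\un{i})\in\mathscr{D}(m,n)$ if and only if $d_k=\min\{d_0,d_1,\ldots,d_{m+n-1}\}$. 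Using $m\al_1+n\al_2=0$, one also checks that $d_j=0$ is equivalent to $(i_1\cdots i_j)\in\mathsf{M}(\al_1,\al_2)$, and more generally $d_j=d_k$ (for $0\le j<k<m+n$) is equivalent to the cyclically consecutive segment $(i_{j+1}\cdots i_k)$ lying in $\mathsf{M}(\al_1,\al_2)$.

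Next I use cyclic irreducibility. Suppose $d_j=d_k$ for some $0\le j<k\le m+n-1$ with $(j,k)\ne(0,m+n)$. Then after shifting by $j$, the initial subsequence of length $k-j$ of $j\cdot\un{i}$ belongs to $\mathsf{M}(\al_1,\al_2)$ and is a proper non-empty initial segment (the complementary cyclic segment also sums to zero because $\un{i}\in\mathsf{M}(\al_1,\al_2)$), so $j\cdot\un{i}$ is cyclically reducible. Since the $\Z$-action preserves $\mathsf{Irr}(\al_1,\al_2)$, this contradicts $\un{i}\in\mathsf{Irr}(\al_1,\al_2)$. Hence the $m+n$ values $d_0,d_1,\ldots,d_{m+n-1}$ are pairwise distinct; in particular they admit a unique minimum, attained at some index $k^\ast\in\iv{0}{m+n-1}$.

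Finally, I need to know that the $\Z$-orbit of $\un{i}$ in $\mathsf{M}$ has exactly $m+n$ elements, so that a unique index $k^\ast$ translates into a unique orbit representative. If $\un{i}$ had minimal period $p$ dividing $m+n$ with $p<m+n$, then each period would contain $mp/(m+n)$ copies of $1$ and $np/(m+n)$ copies of $2$; since $\gcd(m,m+n)=\gcd(m,n)=1$, we would need $(m+n)\mid p$, contradicting $p<m+n$. Combining this with the unique-minimum statement shows there is exactly one $k\in\iv{0}{m+n-1}$, and hence exactly one element in $\Z\cdot\un{i}$, whose lattice path lies in $\mathscr{D}(m,n)$. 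The main obstacle in this argument is the first bookkeeping step — carefully relating the cyclic reducibility data of $\un{i}$ to coincidences of the deviation function — but once that correspondence is set up cleanly, both existence and uniqueness fall out of the same observation that cyclic irreducibility forces all deviations $d_0,\ldots,d_{m+n-1}$ to be distinct.
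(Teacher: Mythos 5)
Your argument is correct, and it is worth noting that the paper itself offers no proof of this statement: Lemma \ref{lem:fuku} is simply quoted from Fukukawa's preprint, so your write-up supplies a self-contained cycle-lemma style proof rather than an alternative to an argument in the text. The core of your proof is sound: the translation $d_j=my_j-nx_j$, the identity $d'_j=d_{j+k}-d_k$ for the shifted sequence, the equivalence of $d_j=d_k$ with the cyclic segment $(i_{j+1}\cdots i_k)$ lying in $\mathsf{M}(\al_1,\al_2)$ (using $m\al_1+n\al_2=0$), and the conclusion that cyclic irreducibility forces $d_0,\ldots,d_{m+n-1}$ to be pairwise distinct, so the minimum is attained at a unique shift. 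Two small remarks. First, the distinctness of the $d_j$ already follows from Lemma \ref{lem:al1al2} without invoking irreducibility: a nonempty proper cyclic segment of length $k-j<m+n$ cannot lie in $\mathsf{M}(\al_1,\al_2)$ because every nonempty element has length divisible by $m+n$; your route through cyclic reducibility is equally valid, and the side condition $(j,k)\ne(0,m+n)$ you impose is vacuous given $k\le m+n-1$. Second, the closing paragraph about the orbit having exactly $m+n$ elements is not actually needed for the statement as phrased: uniqueness of the minimizing index already guarantees that at most one \emph{sequence} in the orbit maps to $\mathscr{D}(m,n)$, and existence is given by $k^\ast\cdot\un{i}$; the aperiodicity argument (via $\gcd(m,m+n)=1$) is correct but redundant. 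Neither point affects the validity of the proof.
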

Notice that Lemma \ref{lem:fuku} together with \cite[Theorem~2.4]{Fukukawa2013pre}, which gives a formula for the cardinality of $\mathscr{D}(m,n)$, can be used to count the number of $\Z$-orbits in $\mathsf{Irr}(\al_1,\al_2)$. We now give an independent proof of the same result.
\begin{prp} Let $\al_1,\al_2\in\C^\times$ and let $m\al_1+n\al_2=0$ for $m,n$ relatively prime positive integers. Then the number of $\Z$-orbits of cyclically irreducible sequences in $\mathsf{M}(\al_1,\al_2)$ is $\frac{1}{m+n}{m+n \choose m}$.
\end{prp}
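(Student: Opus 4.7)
The plan is to combine Proposition \ref{prp:irred} with a straightforward free-action argument. By Proposition \ref{prp:irred}, the cyclically irreducible sequences in $\mathsf{M}(\al_1,\al_2)$ are exactly the sequences of length $m+n$ whose symbols sum to zero against $\al_1,\al_2$. Since $m\al_1+n\al_2=0$ with $m,n$ relatively prime, Lemma \ref{lem:al1al2} forces any such sequence to contain exactly $m$ occurrences of $1$ and $n$ occurrences of $2$. Conversely, any sequence of length $m+n$ with these symbol counts lies in $\mathsf{M}(\al_1,\al_2)$ and hence in $\mathsf{Irr}(\al_1,\al_2)$. Thus
\[
\#\mathsf{Irr}(\al_1,\al_2)=\binom{m+n}{m}.
\]

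Next I would show that the $\Z$-action on $\mathsf{Irr}(\al_1,\al_2)$ is free, which would immediately yield that the number of orbits equals $\frac{1}{m+n}\binom{m+n}{m}$. Let $\un{i}\in\mathsf{Irr}(\al_1,\al_2)$ and suppose that $k\cdot\un{i}=\un{i}$ for some $k$ with $0<k<m+n$. Let $p$ be the minimal positive period of $\un{i}$; then $p$ divides $m+n$. Writing $\un{i}$ as $(m+n)/p$ concatenated copies of its initial subsequence of length $p$, and denoting by $m'$ (resp.\ $n'$) the number of $1$'s (resp.\ $2$'s) in that initial block, one obtains
\[
m=m'\cdot\frac{m+n}{p},\qquad n=n'\cdot\frac{m+n}{p}.
\]
Hence $(m+n)/p$ is a common divisor of $m$ and $n$. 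Since $\gcd(m,n)=1$, this forces $(m+n)/p=1$, i.e.\ $p=m+n$, contradicting $0<k<m+n$. Therefore the stabilizer of every $\un{i}\in\mathsf{Irr}(\al_1,\al_2)$ is trivial, so every $\Z$-orbit has exactly $m+n$ elements.

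Dividing the total count by the common orbit size yields
\[
\#\big(\mathsf{Irr}(\al_1,\al_2)/\Z\big)=\frac{1}{m+n}\binom{m+n}{m},
\]
as claimed. The only mildly delicate point is the freeness of the action, but the coprimality assumption on $m$ and $n$ makes this a one-line consequence of the periodicity argument; no serious obstacle is expected.
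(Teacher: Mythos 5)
Your proof is correct and follows essentially the same route as the paper: both count $\#\mathsf{Irr}(\al_1,\al_2)=\binom{m+n}{m}$ via Proposition \ref{prp:irred} and Lemma \ref{lem:al1al2}, and both use the periodicity-plus-coprimality argument to rule out nontrivial shift symmetries. The only cosmetic difference is that the paper phrases the final step via Burnside's Lemma with all non-identity fixed-point sets empty, whereas you state directly that the action is free and every orbit has size $m+n$ --- the same computation in different clothing.
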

\begin{proof}
We need to count $\# (Y/\Z)$, where $Y=\mathsf{Irr}(\al_1,\al_2)$. 
We know by Proposition \ref{prp:irred} that  $\un{i}\in\mathsf{Irr}(\al_1,\al_2)$ if and only if $\un{i}\in\mathsf{M}(\al_1,\al_2)$ and $\ell(\un{i})=m+n$.
Then the $\Z$-action on $\mathsf{Irr}(\al_1,\al_2)$ by shifts descends to an action of $\Z_{m+n}:=\Z/(m+n) \Z$. Since $\# (Y/\Z)=\# (Y/\Z_{m+n})$, we can then use Burnside's Lemma to count
\begin{equation}\label{eq:burnside} \#(Y/\Z_{m+n})=\frac{1}{m+n}\sum_{\bar{k}\in\Z_{m+n}}\#(Y^{\bar{k}})=\frac{1}{(m+n)}\sum_{\bar{k}=0}^{m+n-1}\#(Y^{\bar{k}}). \end{equation}
Now suppose that $0<k<m+n$ and that $\un{i}\in Y$ is a sequence such that $\bar{k}\cdot\un{i}=\un{i}$. Then we have necessarily $\un{i}=(i_1,i_2,\ldots,i_s,i_1,i_2,\ldots,i_s,\ldots,i_1,i_2,\ldots,i_s)$ for some $s$ such that $s|k$ and $s|(m+n)$.
It follows that $\ell_j(\un{i})=\frac{m+n}{s}\ell_j(\un{i}')$ for $j=1,2$, where $\un{i}'=(i_1,i_2,\ldots,i_s)$. But by Lemma \ref{lem:al1al2} we know that for all $\un{i}\in Y$ we have $\ell_1(\un{i})=m$ and $\ell_2(\un{i})=n$ which are relatively prime integers. This would imply that $s=m+n$, which is impossible since $k<m+n$, therefore $Y^{\bar{k}}=\emptyset$ for all $\bar{k}\neq\bar{0}$.
We then rewrite \eqref{eq:burnside} as
$$  \#(Y/\Z_{m+n})=\frac{1}{m+n}(\# Y^{\bar{0}})=\frac{1}{m+n}(\# Y).$$
To conclude, observe that to count all sequences in $Y$ we just have to chose which entries are $1$'s and which ones are $2$'s, and since for all $\un{i}\in \mathsf{Irr}(\al_1,\al_2)$ we have $\ell(\un{i})=m+n$ and $\ell_1(\un{i})=m$, there are exactly ${m+n \choose m}$ possibilities.
\end{proof}

\section{Fundamental solutions}

\begin{dfn} \label{dfn:fundamental-solutions}
Let $\al_1,\al_2\in \C^\times$.
To each $\un{i}\in \mathsf{M}(\al_1,\al_2)$ we attach a pair of monic polynomials $P^{\un{i}}=(P_1^{\un{i}},P_2^{\un{i}})\in R\times R$ given by
\begin{equation}\label{eq:Pi-def}
P_m^{\un{i}}(u)=(u-\eta_1)^{1-\delta_{i_1,m}}(u-\eta_2)^{1-\delta_{i_2,m}}\cdots (u-\eta_N)^{1-\delta_{i_N,m}},\qquad m=1,2,
\end{equation}
where $\eta_k =\eta_k(\un{i}) = \al_{i_1}+\al_{i_2}+\cdots +\al_{i_k}$ for $k\in\iv{1}{N}$ and $N=\ell(\un{i})$.
\end{dfn}

\begin{prp} \label{prp:fundamental-solution}
If $\al_1,\al_2\in \C^\times$ and $\un{i}\in \mathsf{M}(\al_1,\al_2)$ then $P^{\un{i}}\in S(\al_1,\al_2)$.
\end{prp}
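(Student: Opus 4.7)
The plan is a direct verification by expanding the product $P_1^{\underline{i}}(u) P_2^{\underline{i}}(u)$ and the shifted product $P_1^{\underline{i}}(u+\alpha_2) P_2^{\underline{i}}(u+\alpha_1)$ as products indexed by $k\in\iv{1}{N}$, then comparing factor by factor.

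First I would observe that $P_1^{\underline{i}}$ and $P_2^{\underline{i}}$ are by construction nonzero monic polynomials, so the only content is the functional equation. Next, since $i_k\in\{1,2\}$ we have $\delta_{i_k,1}+\delta_{i_k,2}=1$, hence $(1-\delta_{i_k,1})+(1-\delta_{i_k,2})=1$ for every $k$. Multiplying the two product formulas \eqref{eq:Pi-def} therefore gives the telescoping-free identity
\[
P_1^{\underline{i}}(u)P_2^{\underline{i}}(u)=\prod_{k=1}^{N}(u-\eta_k).
\]

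For the right-hand side, the key observation is that if $i_k=1$ then the factor indexed by $k$ only appears in $P_2^{\underline{i}}(u+\alpha_1)$, where it equals $u+\alpha_1-\eta_k=u-\eta_{k-1}$ because $\eta_k=\eta_{k-1}+\alpha_{i_k}=\eta_{k-1}+\alpha_1$. Similarly, if $i_k=2$ the factor appears only in $P_1^{\underline{i}}(u+\alpha_2)$ and equals $u+\alpha_2-\eta_k=u-\eta_{k-1}$. In either case the $k$-th factor collapses to $u-\eta_{k-1}$, so
\[
P_1^{\underline{i}}(u+\alpha_2)P_2^{\underline{i}}(u+\alpha_1)=\prod_{k=1}^{N}(u-\eta_{k-1})=\prod_{k=0}^{N-1}(u-\eta_k).
\]

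Finally I would compare the two products. They agree on the indices $k\in\iv{1}{N-1}$, and differ only in that one contains the factor $(u-\eta_N)$ while the other contains $(u-\eta_0)$. By definition $\eta_0=0$, and the hypothesis $\underline{i}\in\mathsf{M}(\alpha_1,\alpha_2)$ is precisely the statement that $\eta_N=\sum_{k=1}^N\alpha_{i_k}=0$. Hence the two products are equal, and $P^{\underline{i}}\in S(\alpha_1,\alpha_2)$. There is no real obstacle here: the argument is purely bookkeeping, and the one spot where the defining condition on $\underline{i}$ is actually used is exactly the identification $\eta_N=\eta_0$, which is reassuring in that the hypothesis is neither overused nor unused.
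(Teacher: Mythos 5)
Your proof is correct and follows essentially the same route as the paper's: both expand $P_1^{\un{i}}(u)P_2^{\un{i}}(u)=\prod_{k=1}^N(u-\eta_k)$, observe that in the shifted product the $k$-th factor picks up exactly the shift $\al_{i_k}$ and so collapses to $u-\eta_{k-1}$, and close the cycle using $\eta_N=0$, which is precisely the condition $\un{i}\in\mathsf{M}(\al_1,\al_2)$. The only cosmetic difference is that you introduce the convention $\eta_0=0$ where the paper treats the index $k=1$ separately via $\al_{i_1}-\eta_1=0=-\eta_N$; this is harmless.
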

\begin{proof} Let $N=\ell(\un{i})$. We have
\begin{equation}\label{eq:Pi-prp1}
P_1^{\un{i}}(u)P_2^{\un{i}}(u) = (u-\eta_1)(u-\eta_2)\cdots (u-\eta_N).
\end{equation}
On the other hand, we have
\[ (u+\al_2-\eta_k)^{1-\delta_{i_k,1}} = (u+\al_{i_k}-\eta_k)^{1-\delta_{i_k,1}}\] 
and 
\[ (u+\al_1-\eta_k)^{1-\delta_{i_k,2}} = (u+\al_{i_k}-\eta_k)^{1-\delta_{i_k,2}},\] 
which implies that 
\begin{equation}\label{eq:Pi-prp2}
P_1^{\un{i}}(u+\al_2)P_2^{\un{i}}(u+\al_1)
=(u+\al_{i_1}-\eta_1)(u+\al_{i_2}-\eta_2)\cdots (u+\al_{i_N}-\eta_N).
\end{equation}
But $\al_{i_k}-\eta_k = -\eta_{k-1}$ for $k>1$ and $\al_{i_1}-\eta_1 = 0 = -\eta_N$ because $\un{i}\in\mathsf{M}(\al_1,\al_2)$. This proves that the right hand sides of \eqref{eq:Pi-prp2} and \eqref{eq:Pi-prp1} are equal.
Thus $(P_1^{\un{i}},P_2^{\un{i}})\in S(\al_1,\al_2)$.
\end{proof}

The solutions $P^{\un{i}}$ for $\un{i}\in\mathsf{M}(\al_1,\al_2)$ are called the \emph{fundamental solutions} 
of $S(\al_1,\al_2)$.

\begin{example} 
Let $\al_1=2$, $\al_2=-3$ and let $\un{i}=(21211)$. Clearly $\un{i}\in\mathsf{M}(2,-3)$. We have $m2+n(-3)=0$ for $(m,n)=(3,2)$.
We then get 
\[ \eta_1=\al_2=-3;\qquad \eta_2=\al_2+\al_1=-1;\qquad \eta_3=\al_2+\al_1+\al_2=-4; \]
\[ \eta_4=\al_2+\al_1+\al_2+\al_1=-2; \qquad \eta_5=\al_2+\al_1+\al_2+\al_1+\al_1=0;\]
and therefore
\begin{align*}
P^{(21211)}_1(u) &= (u-\eta_1)(u-\eta_3)=(u+3)(u+4),\\ 
P^{(21211)}_2(u) &= (u-\eta_2)(u-\eta_4)(u-\eta_5)=u(u+1)(u+2).
\end{align*}
One can directly check that
\[P^{(21211)}_1(u+\al_2)P^{(21211)}_2(u+\al_1)=P^{(21211)}_1P^{(21211)}_2.\]
\end{example}
\begin{example}
Let $\al_1=5$, $\al_2=-3$ and let $\un{i}=(11221222)$. Clearly $\un{i}\in\mathsf{M}(5,-3)$. We have $m5+n(-3)=0$ for $(m,n)=(3,5)$.
We then get
\[ \eta_1=5;\qquad \eta_2=10;\qquad \eta_3=7;\qquad \eta_4=4; \]
\[ \eta_5=9; \qquad \eta_6=6;\qquad \eta_7=3;\qquad \eta_8=0;\]
and therefore
\begin{align*}
P^{(11221222)}_1(u) &= (u-\eta_3)(u-\eta_4)(u-\eta_6)(u-\eta_7)(u-\eta_8)=(u-7)(u-6)(u-4)(u-3)u\\ 
P^{(11221222)}_2(u) &= (u-\eta_1)(u-\eta_2)(u-\eta_5)=(u-10)(u-9)(u-5)
\end{align*}
Again one can directly check that $(P^{(11221222)}_1,P^{(11221222)}_2)\in S(5,-3)$.
\end{example}

\begin{dfn} Two solutions $(p_1,p_2)\in R\times R$ and $(q_1,q_2)\in R\times R$ are \emph{equivalent}, written $(p_1,p_2)\sim (q_1,q_2)$, if there exists a complex number $b$ such that $p_1(u)=q_1(u-b)$ and $p_2(u)=q_2(u-b)$.
\end{dfn}

If $P=(p(u),q(u))\in R\times R$ and $\la\in\C$ we put $P(u-\la)=\big(p(u-\la),q(u-\la)\big)\in R\times R$.

\begin{prp} \label{prp:fundamental-properties}
Let $\al_1,\al_2\in \C^\times$. The fundamental solutions $P^{\un{i}}$ satisfy the following properties.
\begin{enumerate}[{\rm (i)}]
\item\label{part:i}
The map
\begin{align*}
P:\mathsf{M}(\al_1,\al_2)&\longrightarrow S(\al_1,\al_2)\\ 
\un{i} &\longmapsto P^{\un{i}}
\end{align*}
 is a homomorphism of multiplicative monoids. That is 
 \[ P^{\un{i} \un{j}} = P^{\un{i}}\cdot P^{\un{j}}\quad\text{and}\quad P^{\emptyset} = (1,1).\] 
\item\label{part:iii}
If $P^{\un{i}}\sim P^{\un{j}}$ for some $\un{i},\un{j}\in\mathsf{M}(\al_1,\al_2)$, then $\ell_m(\un{i})=\ell_m(\un{j})$ for $m=1,2$.
\item\label{part:zeroes-of-P}
If $\ell(\un{i})=N$, then the set of zeroes of $P_1^{\un{i}}(u)P_2^{\un{i}}(u)$ is equal to 
\[\big\{\al_{i_1}, \al_{i_1}+\al_{i_2}, \cdots , \al_{i_1}+\al_{i_2}+\cdots+\al_{i_N}\big\}\]
Note that  $\al_{i_1}+\al_{i_2}+\cdots+\al_{i_N}=0$ since $\un{i}\in\mathsf{M}(\al_1,\al_2)$.
\item\label{part:P-no-repeated-roots}
If $\un{i}$ is cyclically irreducible, then $P^{\un{i}}_1(u)P^{\un{i}}_2(u)$ has no repeated roots.
\item\label{part:iv}
For any $k\in\iv{0}{N-1}$, where $N=\ell(\un{i})$ we have
\begin{equation}\label{eq:fundamental-shifts}
P_m^{k\cdot \un{i}}(u) = P_m^{\un{i}}\big(u+(\al_{i_1}+\al_{i_2}+\cdots +\al_{i_k})\big),\quad m=1,2.
\end{equation}
In particular, if $\Z\cdot \un{i}=\Z\cdot \un{j}$ then $P^{\un{i}}\sim P^{\un{j}}$.
\item\label{part:equality-of-fundamentals}
Let $(\un{i},\la),(\un{j},\mu)\in \mathsf{Irr}(\al_1,\al_2)\times\C$. Then
$P^{\un{i}}(u-\la)=P^{\un{j}}(u-\mu)$ if and only if there exists an integer $k\in\iv{0}{N-1}$ where $N=\ell(\un{i})$
such that $\un{j}=k\cdot\un{i}$ and $\mu=\la+\al_{i_1}+\cdots+\al_{i_k}$.
\end{enumerate}
\end{prp}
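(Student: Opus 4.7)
The plan is to address the six parts in order. Parts (i)--(v) are direct calculations from Definition~\ref{dfn:fundamental-solutions} once one records two key identities: the step recursion $\eta_k(\un{i}) - \eta_{k-1}(\un{i}) = \al_{i_k}$ (setting $\eta_0 := 0$), and the closure $\eta_{\ell(\un{i})}(\un{i}) = 0$, which holds precisely because $\un{i} \in \mathsf{M}(\al_1, \al_2)$.

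For (i), one verifies $\eta_k(\un{i}\un{j}) = \eta_k(\un{i})$ for $k \le \ell(\un{i})$ and $\eta_{\ell(\un{i})+k}(\un{i}\un{j}) = \eta_k(\un{j})$ (the latter using closure), which splits the defining product for $P^{\un{i}\un{j}}_m$ as $P^{\un{i}}_m \cdot P^{\un{j}}_m$. For (ii), note that $\deg P^{\un{i}}_m = \ell(\un{i}) - \ell_m(\un{i})$; equivalence preserves each degree, and $\ell(\un{i}) = \deg P^{\un{i}}_1 + \deg P^{\un{i}}_2$ then recovers $\ell$ and each $\ell_m$. For (iii), the exponents at the factor $(u-\eta_k)$ in $P^{\un{i}}_1 P^{\un{i}}_2$ sum to $(1-\delta_{i_k,1}) + (1-\delta_{i_k,2}) = 1$, giving $\prod_k (u - \eta_k)$. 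For (iv), a collision $\eta_j = \eta_k$ with $1 \le j < k \le N$ forces $(i_{j+1}, \ldots, i_k) \in \mathsf{M}(\al_1, \al_2)$; the cyclic shift by $j$ then realizes $\un{i}$ as a product of two nonempty elements of $\mathsf{M}(\al_1, \al_2)$, contradicting cyclic irreducibility. For (v), one verifies $\eta_j(k \cdot \un{i}) = \eta_{(k+j) \bmod N}(\un{i}) - \eta_k(\un{i})$ (with the wrap-around using $\eta_N(\un{i}) = 0$) and reindexes the product for $P^{k\cdot\un{i}}_m$; the ``in particular'' clause then follows with $b = -\eta_k(\un{i})$.

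The main obstacle is (vi). The backward direction is immediate from (v). For the forward direction, I first identify the shift: by (iii) and (iv), both $P^{\un{i}}(u-\la)$ and $P^{\un{j}}(u-\mu)$ have simple roots, so equating root sets gives $\{\la + \eta_k(\un{i}) : k \in \iv{1}{\ell(\un{i})}\} = \{\mu + \eta_k(\un{j}) : k \in \iv{1}{\ell(\un{j})}\}$. Since $\mu$ appears on the right (from $\eta_{\ell(\un{j})}(\un{j}) = 0$), it must also appear on the left, whence $\mu = \la + \eta_k(\un{i})$ for some $k$. By (v), $P^{k \cdot \un{i}} = P^{\un{j}}$, and the task reduces to showing: if $\un{a}, \un{b} \in \mathsf{Irr}(\al_1, \al_2)$ and $P^{\un{a}} = P^{\un{b}}$, then $\un{a} = \un{b}$.

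For this injectivity, my plan is to reconstruct the sequence from its polynomial data via the walk it traces out. Let $R$ be the set of roots of $P^{\un{a}}_1 P^{\un{a}}_2$; by (iii) and (iv), $R = \{\eta_1, \ldots, \eta_N\}$ consists of $N$ distinct elements and contains $0 = \eta_N$. For each $x \in R$, the unique $m \in \{1,2\}$ such that $x$ is \emph{not} a root of $P^{\un{a}}_m$ equals $i_k$, where $\eta_k = x$; so this labeling depends only on $(P^{\un{a}}_1, P^{\un{a}}_2)$. The recursion $\eta_{k-1} = \eta_k - \al_{i_k}$ then defines a predecessor function on $R$ (using $\eta_0 := \eta_N = 0$) in purely polynomial terms, and iterating its inverse starting at $0$ uniquely reconstructs the walk $\eta_0, \eta_1, \ldots, \eta_N$ and hence $\un{a}$. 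Reducing $k$ modulo $\ell(\un{i})$ places it in $\iv{0}{\ell(\un{i})-1}$, completing (vi).
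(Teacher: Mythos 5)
Your proposal is correct and follows essentially the same route as the paper: parts (i)--(iii) and (v) by direct computation with the partial sums $\eta_k$, part (iv) via the observation that a collision $\eta_j=\eta_k$ produces a nonempty cyclically consecutive subsequence in $\mathsf{M}(\al_1,\al_2)$, and part (vi) by locating the shift through the root $0=\eta_N$ and then reducing to the injectivity of $\un{i}\mapsto P^{\un{i}}$ on cyclically irreducible sequences. Your ``predecessor function'' reconstruction is just a repackaging of the paper's step-by-step recovery of $i_N, i_{N-1},\ldots$ from the simple roots, so the two arguments coincide in substance.
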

\begin{proof}

\eqref{part:i}-\eqref{part:zeroes-of-P} Straightforward to verify.

\eqref{part:P-no-repeated-roots} Suppose that $\al_{i_1}+\cdots+\al_{i_r}=\al_{i_1}+\cdots+\al_{i_s}$ for some $r,s\in\iv{1}{N}$, $r<s$. Then $\al_{i_{r+1}}+\cdots+\al_{i_s}=0$ which means that $\un{i}'=(i_{r+1}i_{r+2}\cdots i_s)\in\mathsf{M}(\al_1,\al_2)$ which implies that $\un{i}$ is cyclically reducible.

\eqref{part:iv} First suppose that $k=1$, and put $\un{j}=1\cdot \un{i}$, so that 
  $(j_1j_2\cdots j_N)=(i_2i_3\cdots i_Ni_1)$. The factors of $P^{\un{j}}_m$ have the form
$\big(u-\eta_r(\un{j})\big)^{1-\delta_{m,j_r}}$
where $r\in\iv{1}{N}$. 
If $r<N$, we have
\begin{equation*}
\eta_r(\un{j})=\al_{j_1}+\cdots +\al_{j_r} = \al_{i_2} + \cdots + \al_{i_{r+1}} =\eta_{r+1}(\un{i})-\al_{i_1}. 
\end{equation*}
Also
\[ \eta_N(\un{j})=\al_{j_1}+\cdots+\al_{j_N}=0=\al_{i_1}-\al_{i_1}=\eta_1(\un{i})-\al_{i_1}.\]
This shows that
\[\big(u-\eta_r(\un{j})\big)^{1-\delta_{m,j_r}}
=
\big(u-\eta_{r+1}(\un{i})+\al_{i_1})^{1-\delta_{m,i_{r+1}}}.
\]
Taking the product over all $r$ in $\iv{1}{N}$ we obtain \eqref{eq:fundamental-shifts}.
The general case then follows by repeated applications of the case $k=1$. 

\eqref{part:equality-of-fundamentals} If $\un{j}=k\cdot \un{i}$ and $\mu=\la+\al_{i_1}+\cdots+\al_{i_k}$, then part \eqref{part:iv} directly implies that $P^{\un{i}}(u-\la)=P^{\un{j}}(u-\mu)$.
Conversely, suppose that $P^{\un{i}}(u-\la)=P^{\un{j}}(u-\mu)$. By part \eqref{part:zeroes-of-P}, $\mu$ is a zero of $P^{\un{j}}_1(u-\mu)P^{\un{j}}_2(u-\mu)$, hence $\mu$ is a zero of $P^{\un{i}}_1(u-\la)P^{\un{i}}_2(u-\la)$. By part \eqref{part:zeroes-of-P} again, 
\[\mu=\la+\al_{i_1}+\al_{i_2}+\cdots+\al_{i_k}\]
for some $k\in\iv{0}{N-1}$. Thus
\[P^{\un{i}}(u-\la)=P^{\un{i}}(u-\mu+\al_{i_1}+\al_{i_2}+\cdots+\al_{i_k})=P^{k\cdot\un{i}}(u-\mu),\]
where we used part \eqref{part:iv} in the last equality.
Thus it suffices to show that $P^{\un{i}}=P^{\un{j}}$ implies $\un{i}=\un{j}$. Suppose $P^{\un{i}}=P^{\un{j}}$. Then 
$P^{\un{i}}_m=P^{\un{j}}_m$ for $m=1,2$.
Recall that $\eta_r(\un{k})=\al_{k_1}+\cdots+\al_{k_r}$ for $\un{k}\in\mathsf{M}(\al_1,\al_2)$.
By definition of $P^{\un{k}}$, $\eta_r(\un{k})$ is a zero of $P^{\un{k}}_{3-k_r}$ for all $r\in\iv{1}{N}$. For $r=N$ we get that $\eta_N(\un{i})=0=\eta_N(\un{j})$ is a zero of $P^{\un{i}}_{3-i_N}$ and of $P^{\un{j}}_{3-j_N}$. On the other hand $P^{\un{i}}_{3-i_N}=P^{\un{j}}_{3-i_N}$. Since 
 $\un{i}$ and $\un{j}$ are cyclically irreducible, part \eqref{part:P-no-repeated-roots} implies that $3-i_N=3-j_N$ i.e. $i_N=j_N$.
Hence $\eta_{N-1}(\un{i})=\eta_{N-1}(\un{j})$, which is a common zero of $P^{\un{i}}_{3-i_{N-1}}$ and $P^{\un{j}}_{3-j_{N-1}}$. Again by part \eqref{part:P-no-repeated-roots}, $i_{N-1}=j_{N-1}$. Continuing like this we eventually get $\un{i}=\un{j}$.
\end{proof}
\begin{rem}\label{rem:unique-Dyck-sequence}
By Proposition \ref{prp:fundamental-properties}\eqref{part:equality-of-fundamentals}, the pair $(\un{i},\la)$ is not uniquely determined by the solution $P^{\un{i}}(u-\la)$. However, by Lemma \ref{lem:fuku}, there is a unique choice if we require that $\pi(\un{i})\in\mathscr{D}(m,n)$.
\end{rem}

As a corollary we obtain a necessary condition for a fundamental solution $P^{\un{i}}$ to be irreducible. 

\begin{cor} \label{cor:irreducible-fundamental-solution}
Let $\al_1,\al_2\in\C^\times$ and assume $\emptyset\neq\un{i}\in\mathsf{M}(\al_1,\al_2)$. Let $(m,n)$ be the pair of relatively prime positive integers such that $m\al_1+n\al_2=0$. Suppose that $P^{\un{i}}$ is irreducible. Then $\ell_1(\un{i})=m$ and $\ell_2(\un{i})=n$.
In particular, $\ell(\un{i})=m+n$.
\end{cor}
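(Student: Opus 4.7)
I would prove the contrapositive: assuming $\ell(\un{i}) > m+n$, I will show that $P^{\un{i}}$ is \emph{not} irreducible. By Lemma \ref{lem:al1al2}, the length condition is equivalent to having $\ell_1(\un{i}) \ne m$ (or equivalently $\ell_2(\un{i}) \ne n$).

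The plan is to combine cyclic reducibility of the sequence with the fact that $P$ is a monoid homomorphism that intertwines the $\Z$-action with a shift of polynomials. Concretely, by Proposition \ref{prp:irred}, the assumption $\ell(\un{i}) > m+n$ forces $\un{i}$ to be cyclically reducible, so there exist an integer $k$ and sequences $\un{j},\un{k}' \in \mathsf{M}(\al_1,\al_2)$ with $\un{j},\un{k}' \neq \emptyset$ such that $k \cdot \un{i} = \un{j}\,\un{k}'$. Applying Proposition \ref{prp:fundamental-properties}\eqref{part:i} (monoid homomorphism), I get $P^{k\cdot\un{i}} = P^{\un{j}} \cdot P^{\un{k}'}$ in $R \times R$, and by Proposition \ref{prp:fundamental-properties}\eqref{part:iv} there exists $c \in \C$ (explicitly $c = \al_{i_1}+\cdots+\al_{i_k}$) with $P^{k\cdot\un{i}}(u) = P^{\un{i}}(u+c)$. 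Substituting $u \mapsto u-c$ yields the factorization
\[
P^{\un{i}}(u) \;=\; P^{\un{j}}(u-c)\cdot P^{\un{k}'}(u-c)
\]
in the monoid $R \times R$.

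It remains to check that both factors of this factorization lie in $S(\al_1,\al_2)$ and that neither equals the trivial solution $(1,1)$. The polynomials $P^{\un{j}}$ and $P^{\un{k}'}$ themselves belong to $S(\al_1,\al_2)$ by Proposition \ref{prp:fundamental-solution}, and a direct substitution argument shows that $S(\al_1,\al_2)$ is stable under the translation $u\mapsto u-c$: if $p(u)q(u) = p(u+\al_2)q(u+\al_1)$, then replacing $u$ by $u-c$ gives the same identity for $(p(u-c), q(u-c))$, which is still a pair of monic polynomials. Since $\un{j}, \un{k}' \neq \emptyset$, we have $\deg P^{\un{j}}_1 + \deg P^{\un{j}}_2 = \ell(\un{j}) > 0$ and likewise for $\un{k}'$, so neither factor has both components constant. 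This contradicts the irreducibility of $P^{\un{i}}$, completing the contrapositive.

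I do not expect any real obstacle here: every ingredient (the homomorphism property, the shift-covariance of the $P^{\un{i}}$, the shift-invariance of $S(\al_1,\al_2)$, and the length characterization of cyclically irreducible sequences) has already been established, so the argument is essentially a direct assembly of Propositions \ref{prp:irred} and \ref{prp:fundamental-properties}. The only mild subtlety is remembering to translate by $-c$ in order to land back on $P^{\un{i}}$ itself rather than on its cyclic shift $P^{k\cdot\un{i}}$.
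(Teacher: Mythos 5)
Your proof is correct and is essentially the paper's argument in contrapositive form: the paper likewise obtains a decomposition of a cyclic shift of $\un{i}$ into two nonempty sequences of $\mathsf{M}(\al_1,\al_2)$ (directly via Lemma \ref{lem:subseq}, where you invoke Proposition \ref{prp:irred}, itself a consequence of that lemma) and then combines the monoid homomorphism property and shift-covariance of Proposition \ref{prp:fundamental-properties} with the shift-invariance of the solution set to contradict irreducibility. The only cosmetic difference is that the paper argues directly, tracking that the first factor has $\ell_1=m$, $\ell_2=n$ and concluding with Proposition \ref{prp:fundamental-properties}(ii), whereas you produce an arbitrary non-trivial factorization; both are sound.
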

\begin{proof} Assume that $P^{\un{i}}$ is irreducible. Put $l_m=\ell_m(\un{i})$ for $m=1,2$.
By Lemma \ref{lem:al1al2}, $l_1=dm$ and $l_2=dn$ for some positive integer $d$. In fact $d=\GCD(l_1,l_2)$ since $\GCD(m,n)=1$.
By Lemma \ref{lem:subseq} there exists $n\in\Z$ such that $n\cdot\un{i}=\un{i}'\un{i}''$ where $\un{i}'$ and $\un{i}''$ belong to $\mathsf{M}(\al_1,\al_2)$, and where $\ell_1(\un{i}')=l_1/d=m$ and $\ell_2(\un{i}')=l_2/d=n$.
By Proposition \ref{prp:fundamental-properties}~\eqref{part:i} and \eqref{part:iv}, 
\[ P^{\un{i}}\sim P^{n\cdot \un{i}}=P^{\un{i}'\un{i}''}=P^{\un{i}'}P^{\un{i}''}. \] 
Since $P^{\un{i}}$ is irreducible, $P^{n\cdot \un{i}}$ is also irreducible. But since $P^{\un{i}'}$ is non-constant, it follows that $P^{\un{i}''}$ is constant, which implies that $\un{i}''=\emptyset$. Thus $P^{\un{i}}\sim P^{n\cdot\un{i}}=P^{\un{i}'}$. By Proposition \ref{prp:fundamental-properties}~\eqref{part:iii}, we get $\ell_m(\un{i})=\ell_m(\un{i}')$ for $m=1,2$. Thus $d=1$.
\end{proof}

\begin{prp} \label{prp:factorization}
Let $\al_1,\al_2\in \C^\times$. Then any non-trivial solution $Q\in S(\al_1,\al_2)$ is divisible by a shifted fundamental solution $P^{\un{i}}(u-\la)$ for some non-empty sequence $\un{i}\in\mathsf{M}(\al_1,\al_2)$ and some $\la\in\C$.
\end{prp}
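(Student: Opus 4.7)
The plan is to trace a walk through the roots of $q_1q_2$ using the Mazorchuk-Turowska equation and extract a simple cycle whose data defines the desired fundamental solution. Let $Q=(q_1,q_2)\in S(\al_1,\al_2)$ be a non-trivial solution. Since $q_1q_2$ is non-constant, I would pick a root $c_0$ of $q_1q_2$ and let $m_0\in\{1,2\}$ be such that $q_{m_0}(c_0)=0$.

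Inductively I extend the walk as follows. Given a pair $(c_k,m_k)$ with $q_{m_k}(c_k)=0$, evaluating the equation $q_1(u)q_2(u)=q_1(u+\al_2)q_2(u+\al_1)$ at $u=c_k$ gives $0=q_1(c_k+\al_2)q_2(c_k+\al_1)$, so either $c_k+\al_2$ is a root of $q_1$ or $c_k+\al_1$ is a root of $q_2$. Choosing one, set $i_{k+1}\in\{1,2\}$ to be the index such that $c_k+\al_{i_{k+1}}$ is a root of $q_{3-i_{k+1}}$, and define $c_{k+1}=c_k+\al_{i_{k+1}}$, $m_{k+1}=3-i_{k+1}$. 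The pairs $(c_k,m_k)$ live in the finite set $\{(c,m)\in\C\times\{1,2\}\mid q_m(c)=0\}$, so the walk must revisit some state. I would pick $j<k$ with $(c_j,m_j)=(c_k,m_k)$ minimizing $k-j$; by this minimality, the pairs $(c_j,m_j),(c_{j+1},m_{j+1}),\ldots,(c_{k-1},m_{k-1})$ are pairwise distinct, for any earlier coincidence would yield a strictly shorter cycle.

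Having extracted this simple cycle, I set $\un{i}=(i_{j+1},i_{j+2},\ldots,i_k)$ and $\la=c_j$. The telescoping identity
\[\sum_{l=j+1}^{k}\al_{i_l}=\sum_{l=j+1}^{k}(c_l-c_{l-1})=c_k-c_j=0\]
shows $\un{i}\in\mathsf{M}(\al_1,\al_2)$, and $\un{i}$ is non-empty because $j<k$. It remains to verify that $P^{\un{i}}(u-\la)$ divides $Q$ in $R\times R$. Unpacking Definition \ref{dfn:fundamental-solutions}, the roots of $P_m^{\un{i}}(u-\la)$ are exactly the numbers $\la+\eta_l=c_{j+l}$ for those $l\in\iv{1}{k-j}$ satisfying $i_{j+l}\neq m$, equivalently $m_{j+l}=m$. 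The distinctness of the cycle members ensures these values are pairwise distinct complex numbers, and each is by construction a root of $q_m$. Consequently $P_m^{\un{i}}(u-\la)\mid q_m(u)$ for $m=1,2$, which is exactly the required divisibility.

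The main obstacle is ensuring that the cycle extracted from the walk has distinct vertices so that the final divisibility argument does not demand multiplicities in $q_m$ that may not be present; this is precisely what the minimal-length choice of cycle buys us. Note that this argument produces \emph{some} fundamental divisor of $Q$, not necessarily an irreducible one, but that is all the proposition asks for.
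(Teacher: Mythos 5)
Your proposal is correct and follows essentially the same argument as the paper: iterate the functional equation to walk through roots of $q_1q_2$, use finiteness to extract a cycle, and use minimality of the cycle to get distinctness of the roots appearing in each component, which yields the divisibility by $P^{\un{i}}(u-\la)$. The only cosmetic difference is that you track states $(c_k,m_k)$ and get distinct pairs, whereas the paper tracks only the values $\la_k$ and takes a minimal period to get distinct values; both versions suffice for the final step.
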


\begin{proof}
Let $Q=(q_1,q_2)\in S(\al_1,\al_2)$, $Q\neq (1,1)$. Let $\la_0$ be any root of $q_1(u)q_2(u)$.
We recursively define an infinite sequence
 $j_0,j_1,j_2,\ldots$ of elements from $\{1,2\}$ satisfying
\[q_{3-j_k}(\la_k)=0\qquad\text{for all $k\in\Z_{\ge 0}$,}\]
or, equivalently,
\begin{equation}\label{eq:fk-property}
 (u-\la_k)^{1-\delta_{j_k,m}} \big| q_m(u) \qquad\text{for all $k\in\Z_{\ge 0}$ and $m\in\{1,2\}$,}
\end{equation}
where we put $\la_k=\la_0+\al_{j_1}+\al_{j_2}+\cdots+\al_{j_k}$ for $k\in\Z_{>0}$.
 Define $j_0\in\{1,2\}$ such that $q_{3-j_0}(\la_0)=0$.
For $k\in\Z_{\ge 0}$, assume $j_k\in\{1,2\}$ is such that $q_{3-j_k}(\la_k)=0$.
Then $\la_k$ is a zero of $q_1(u)q_2(u)=q_1(u+\al_2)q_2(u+\al_1)$. Define $j_{k+1}\in\{1,2\}$ such that $\la_k$ is a root of $q_{3-j_{k+1}}(u+\al_{j_{k+1}})$. Then $\la_{k+1}$ is a root of $q_{3-j_{k+1}}(u)$.

Since the set of roots of $q_1(u)q_2(u)$ is finite, there exists a smallest positive integer $M$ for which $\la_{M+r}=\la_r$ for some $r\in\Z_{\ge 0}$. By definition of $\la_k$ this means that
\begin{equation}\label{eq:sigma-product}
\la_r+\al_{j_{r+1}}+\al_{j_{r+2}}+\cdots+\al_{j_{r+M}}=\la_r,
\end{equation}
which implies that the sequence
 $\un{i}:=(j_{r+1}j_{r+2}\cdots j_{r+M})$ belongs to 
$\mathsf{M}(\al_1,\al_2)$. By minimality of $M$, the numbers $\la_{r+1},\ldots,\la_{r+M}$ are pairwise different.
Thus \eqref{eq:fk-property} implies that $P_m^{\un{i}}(u-\la_r)$ divides $q_m$ for $m=1,2$. Thus $Q$ is divisible by $P^{\un{i}}(u-\la_r)$.
\end{proof}

Now we can prove the first two main results stated in the introduction.

\begin{thm}\label{thm:main1} 
Let $\al$ and $\be$ be non-zero complex numbers. Assume that $m\al+n\be\neq 0$ for all positive integers $m$ and $n$. Then the only solution $(p,q)$ to \eqref{eq:main-equation}, in which $p$ and $q$ are non-zero and monic, is the trivial solution $(p,q)=(1,1)$.
\end{thm}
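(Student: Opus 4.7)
The plan is to argue by contradiction, leveraging Proposition \ref{prp:factorization} together with Lemma \ref{lem:al1al2}. Suppose for contradiction that $(p,q)$ is a non-zero monic solution to \eqref{eq:main-equation} other than the trivial one. Translating the notation, \eqref{eq:main-equation} is the defining relation of $S(\al_1,\al_2)$ with $\al_1=\al$ and $\al_2=\be$, so $(p,q)\in S(\al,\be)\setminus\{(1,1)\}$.

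The next step is to apply Proposition \ref{prp:factorization} to this non-trivial solution. That proposition guarantees the existence of a non-empty sequence $\un{i}\in\mathsf{M}(\al,\be)$ and some $\la\in\C$ such that the shifted fundamental solution $P^{\un{i}}(u-\la)$ divides $(p,q)$ in $R\times R$. The key consequence is not the divisibility itself but rather that the submonoid $\mathsf{M}(\al,\be)$ must contain a non-empty element, i.e.\ $\mathsf{M}(\al,\be)$ is non-trivial.

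Finally, I invoke Lemma \ref{lem:al1al2}: the non-triviality of $\mathsf{M}(\al,\be)$ is equivalent to the existence of relatively prime positive integers $m,n$ with $m\al+n\be=0$. This contradicts the standing hypothesis that $m\al+n\be\neq 0$ for all positive integers $m$ and $n$, completing the proof.

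There is essentially no obstacle in this argument: the theorem is a short corollary of the combinatorial groundwork already laid. The substantive work is packaged inside Proposition \ref{prp:factorization}, whose proof extracts a cycle of zeros of $p(u)q(u)$ under the shift dynamics generated by $(\al,\be)$ and reads off from it a non-empty element of $\mathsf{M}(\al,\be)$. Once that mechanism is in hand, Theorem \ref{thm:main1} follows in a few lines.
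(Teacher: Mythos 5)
Your argument is correct and is essentially identical to the paper's proof: both contradict the hypothesis by applying Proposition \ref{prp:factorization} to extract a non-empty element of $\mathsf{M}(\al,\be)$ from a hypothetical non-trivial solution, and then use Lemma \ref{lem:al1al2} to produce positive integers $m,n$ with $m\al+n\be=0$.
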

\begin{proof}
Suppose $(1,1)\neq Q\in S(\al,\be)$. By Proposition \ref{prp:factorization}, $Q$ is divisible by a shifted fundamental solution $P^{\un{i}}(u-\la)$ where $\emptyset\neq \un{i}\in \mathsf{M}(\al_1,\al_2)$ and $\la\in\C$. By Lemma \ref{lem:al1al2}, there exist positive integers $m$ and $n$ such that $m\al+n\be=0$.
\end{proof}
The following easily checked result expresses the fundamental solutions in terms of Dyck paths.
\begin{lem}\label{lem:fundamental-Dyck-solutions}
Let $(\pi,\la)\in\mathscr{D}(m,n)$ and $\un{i}=\un{i}(\pi)$. Define $P^{\pi,\la}=P^{\un{i}}(u-\la)$.
Then 
\[P^{\pi,\la}=\left(P^{\pi,\la}_1,\, P^{\pi,\la}_2\right),
\quad
P_\ep^{\pi,\la}(u)=\prod_{i=1}^{m+n} \big(u-(\la+x_i\al+y_i\be)\big)^{\delta_{\ep,1}\delta_{x_i,x_{i-1}}+\delta_{\ep,2}\delta_{y_i,y_{i-1}}},\quad\ep\in\{1,2\}. \]
\end{lem}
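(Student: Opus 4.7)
The plan is to unfold the definitions and verify that the exponents and factors in Definition \ref{dfn:fundamental-solutions} coincide with those in the statement once we express the sequence $\un{i}=\un{i}(\pi)$ in terms of the lattice coordinates $(x_k,y_k)$ of $\pi$.

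First I would translate the path data into the auxiliary quantities used in Definition \ref{dfn:fundamental-solutions}. By Definition \ref{dfn:seq-to-path}, the entry $i_k$ of $\un{i}(\pi)$ equals $1$ precisely when the $k$-th step of $\pi$ is horizontal (i.e.\ $y_k=y_{k-1}$ and $x_k=x_{k-1}+1$) and equals $2$ precisely when it is vertical (i.e.\ $x_k=x_{k-1}$ and $y_k=y_{k-1}+1$). Consequently the number of $1$'s (resp.\ $2$'s) among $i_1,\ldots,i_k$ is exactly $x_k$ (resp.\ $y_k$), and with $\al_1=\al$, $\al_2=\be$ we obtain
\[\eta_k(\un{i}) = \al_{i_1}+\cdots+\al_{i_k} = x_k\al + y_k\be.\]

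Next I would compare exponents. Definition \ref{dfn:fundamental-solutions} gives the factor $(u-\eta_k)^{1-\delta_{i_k,\ep}}$ for $P^{\un{i}}_\ep$. Using the correspondence above, $1-\delta_{i_k,1}$ equals $1$ iff the $k$-th step is vertical, equivalently iff $x_k=x_{k-1}$, i.e.\ equals $\delta_{x_k,x_{k-1}}$; similarly $1-\delta_{i_k,2}=\delta_{y_k,y_{k-1}}$. Therefore the exponent $1-\delta_{i_k,\ep}$ agrees with $\delta_{\ep,1}\delta_{x_k,x_{k-1}}+\delta_{\ep,2}\delta_{y_k,y_{k-1}}$ for both values of $\ep$.

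Finally I would apply the shift $u\mapsto u-\la$ to $P^{\un{i}}$, which turns each factor $(u-\eta_k)$ into $(u-\la-x_k\al-y_k\be)$, and assemble the product over $k\in\iv{1}{m+n}$ (recalling $N=m+n$ by Corollary \ref{cor:irreducible-fundamental-solution}, since $\pi\in\mathscr{D}(m,n)$ forces $\un{i}(\pi)\in\mathsf{Irr}(\al,\be)$). This reproduces the formula in the statement verbatim. There is no real obstacle here: the lemma is purely a notational translation between the sequence-based description of fundamental solutions used in Section 4 and the Dyck-path description used in the statement of Theorem \ref{thm:main2}.
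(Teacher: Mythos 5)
Your verification is correct and is exactly the routine unfolding of Definitions \ref{dfn:fundamental-solutions} and \ref{dfn:seq-to-path} that the paper has in mind when it calls the lemma ``easily checked,'' so it matches the intended argument. One tiny remark: you do not need Corollary \ref{cor:irreducible-fundamental-solution} to get $N=m+n$ --- a lattice path from $(0,0)$ to $(m,n)$ with unit right/up steps has $m+n$ steps by definition.
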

\begin{thm}\label{thm:main2}
Let $\al$ and $\be$ be non-zero complex numbers. Assume that $m\al+n\be=0$ for some relatively prime positive integers $m$ and $n$. For any generalized Dyck path
\[\pi=\big((x_0,y_0),(x_1,y_1),\ldots,(x_{m+n},y_{m+n})\big)\in \mathscr{D}(m,n)\]
and any complex number $\la\in\C$, the pair of polynomials
\begin{equation}
\label{eq:thm2-solutions} 
(p,q)=\left(P^{\pi,\la}_1,\, P^{\pi,\la}_2\right),
\quad
P_\ep^{\pi,\la}(u)=\prod_{i=1}^{m+n} \big(u-(\la+x_i\al+y_i\be)\big)^{\delta_{\ep,1}\delta_{x_i,x_{i-1}}+\delta_{\ep,2}\delta_{y_i,y_{i-1}}},\;\;\ep\in\{1,2\},
\end{equation}
is an irreducible solution to \eqref{eq:main-equation} and,
conversely, any irreducible solution to \eqref{eq:main-equation} has the form \eqref{eq:thm2-solutions} for some unique $(\pi,\la)\in \mathscr{D}(m,n)\times\C$.
\end{thm}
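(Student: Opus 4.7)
The plan is to reduce the theorem to statements already proved about fundamental solutions $P^{\un{i}}$ via the dictionary of Lemma \ref{lem:fundamental-Dyck-solutions}, which identifies $P^{\pi,\la}=P^{\un{i}(\pi)}(u-\la)$. For a Dyck path $\pi\in\mathscr{D}(m,n)$ the endpoint $(m,n)$ lies on the line $my=nx$, so $\un{i}(\pi)\in\mathsf{M}(\al,\be)$ and $\ell(\un{i}(\pi))=m+n$; Proposition \ref{prp:irred} then gives $\un{i}(\pi)\in\mathsf{Irr}(\al,\be)$, and Proposition \ref{prp:fundamental-solution} (together with the obvious shift-invariance of $S(\al,\be)$) guarantees that $P^{\pi,\la}\in S(\al,\be)$.

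For irreducibility, I would argue as follows. Suppose $P^{\pi,\la}=(p_1,q_1)(p_2,q_2)$ in $S(\al,\be)$ and that $(p_1,q_1)\neq(1,1)$. Applying Proposition \ref{prp:factorization} to $(p_1,q_1)$ yields a non-empty $\un{j}\in\mathsf{M}(\al,\be)$ and some $\mu\in\C$ such that $P^{\un{j}}(u-\mu)$ divides $(p_1,q_1)$ componentwise, hence also divides $P^{\pi,\la}$ componentwise. Then
\[
\ell(\un{j})=\deg\bigl(P^{\un{j}}_1P^{\un{j}}_2\bigr)\le\deg\bigl(P^{\pi,\la}_1P^{\pi,\la}_2\bigr)=m+n,
\]
while Lemma \ref{lem:al1al2} forces $\ell(\un{j})\ge m+n$. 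Equality of degrees and divisibility together force $P^{\un{j}}(u-\mu)=P^{\pi,\la}$, so $(p_1,q_1)=P^{\pi,\la}$ and $(p_2,q_2)=(1,1)$. This is the main obstacle: it is essentially the single sharp degree inequality that lets the irreducibility argument close, and it hinges on the fact that no proper nontrivial piece of $\un{i}(\pi)$ can lie in $\mathsf{M}(\al,\be)$, which is exactly the content of cyclic irreducibility.

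For the converse, let $(p,q)$ be an irreducible solution. Proposition \ref{prp:factorization} produces a non-empty $\un{i}\in\mathsf{M}(\al,\be)$ and $\la\in\C$ with $P^{\un{i}}(u-\la)$ dividing $(p,q)$. By the divisibility property Proposition \ref{prp:elementary-properties}\eqref{part:divisibility-property}, the quotient $(p,q)/P^{\un{i}}(u-\la)$ lies in $S(\al,\be)$; irreducibility of $(p,q)$ then forces this quotient to be trivial, giving $(p,q)=P^{\un{i}}(u-\la)$. Since $(p,q)$ is irreducible, so is $P^{\un{i}}$, and Corollary \ref{cor:irreducible-fundamental-solution} gives $\ell(\un{i})=m+n$, i.e.\ $\un{i}\in\mathsf{Irr}(\al,\be)$.

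To put $(p,q)$ into the required Dyck form, I invoke Lemma \ref{lem:fuku}: there is a unique $\un{i}'$ in the $\Z$-orbit of $\un{i}$ with $\pi:=\pi(\un{i}')\in\mathscr{D}(m,n)$. By Proposition \ref{prp:fundamental-properties}\eqref{part:iv}, passing from $\un{i}$ to $\un{i}'$ amounts to replacing $\la$ by some $\la'\in\C$, whence $(p,q)=P^{\un{i}'}(u-\la')=P^{\pi,\la'}$. Uniqueness of the pair $(\pi,\la)\in\mathscr{D}(m,n)\times\C$ is then exactly Remark \ref{rem:unique-Dyck-sequence}, which is the specialization of Proposition \ref{prp:fundamental-properties}\eqref{part:equality-of-fundamentals} to Dyck representatives of $\Z$-orbits. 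Together these steps complete both directions of the theorem.
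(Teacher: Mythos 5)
Your proposal is correct and follows essentially the same route as the paper: both directions rest on Proposition \ref{prp:factorization} together with the degree/length count from Lemma \ref{lem:al1al2} and Corollary \ref{cor:irreducible-fundamental-solution}, with uniqueness of the Dyck representative coming from Lemma \ref{lem:fuku} via Remark \ref{rem:unique-Dyck-sequence}. Your irreducibility step (any nontrivial factor is forced to equal the whole solution) is just a repackaging of the paper's contradiction that two nontrivial factors would give total degree at least $2(m+n)$.
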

\begin{proof}
 Let $Q$ be an irreducible solution in $S(\al,\be)$. By Proposition \ref{prp:factorization}, $Q$ is equal to $P^{\un{i}}(u-\mu)$ for some $\un{i}\in\mathsf{M}(\al,\be)$ and $\mu\in\C$. By Corollary \ref{cor:irreducible-fundamental-solution} and Proposition \ref{prp:irred}, $\un{i}\in\mathsf{Irr}(\al_1,\al_2)$. By Remark \ref{rem:unique-Dyck-sequence} and Lemma \ref{lem:fundamental-Dyck-solutions}, $P^{\un{i}}(u-\mu)=P^{\pi,\la}$ for some unique $(\pi,\la)\in\mathscr{D}(m,n)\times\C$.

Conversely, let $(\pi,\la)\in\mathscr{D}(m,n)\times\C$ and 
suppose that $P^{\pi,\la}$ is reducible. Thus $P^{\pi,\la}=P'\cdot P''$ for some non-constant solutions $P', P''$. Multiplying the components together this implies that $P^{\pi,\la}_1(u)P^{\pi,\la}_2(u)=P'_1(u)P''_1(u)P'_2(u)P''_2(u)$. By Proposition \ref{prp:factorization} each of the two solutions $P'$ and $P''$ is divisible by a shifted fundamental solution. By Corollary \ref{cor:irreducible-fundamental-solution}, $P'_1(u)P'_2(u)$ and $P''_1(u)P''_2(u)$ each has at least $m+n$ roots. This contradicts the fact that the degree of $P^{\pi,\la}_1(u)P^{\pi,\la}_2(u)$  is equal to $m+n$.
\end{proof}

Recall that two solutions $(p_1,q_1)$ and $(p_2,q_2)$ to \eqref{eq:main-equation} are called equivalent if there exists a complex number $b$ such that $p_1(u)=p_2(u-b)$ and $q_1(u)=q_2(u-b)$. Combining Theorem~\ref{thm:main2} and \cite[Theorem~2.4]{Fukukawa2013pre} we get the following.

\begin{cor}
Let $\al$ and $\be$ be non-zero complex numbers. Assume that $m\al+n\be=0$ for some relatively prime positive integers $m$ and $n$. Then the number of irreducible solutions to $(p,q)$ to \eqref{eq:main-equation} up to equivalence equals $\frac{1}{m+n}\binom{m+n}{m}$.
\end{cor}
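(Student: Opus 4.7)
The plan is to combine Theorem \ref{thm:main2}, which gives a bijective parametrization of irreducible solutions by $\mathscr{D}(m,n)\times\C$, with the known count $\#\mathscr{D}(m,n)=\tfrac{1}{m+n}\binom{m+n}{m}$ from \cite[Theorem~2.4]{Fukukawa2013pre}, after quotienting out the action of $\C$ coming from the equivalence relation.

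First I would observe the following translation property of the fundamental Dyck solutions: for any $(\pi,\la)\in\mathscr{D}(m,n)\times\C$ and any $b\in\C$, direct substitution into formula \eqref{eq:thm2-solutions} gives
\begin{equation*}
P^{\pi,\la}_\ep(u-b)=P^{\pi,\la+b}_\ep(u),\qquad \ep\in\{1,2\},
\end{equation*}
since replacing $u$ by $u-b$ inside the factor $u-(\la+x_i\al+y_i\be)$ merely replaces $\la$ by $\la+b$. Hence on the parameter set $\mathscr{D}(m,n)\times\C$ the equivalence relation between irreducible solutions lifts to the $\C$-action by translation in the second coordinate, while leaving the Dyck-path coordinate fixed.

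Next, using the uniqueness clause of Theorem \ref{thm:main2}, if $P^{\pi_1,\la_1}$ and $P^{\pi_2,\la_2}$ are equivalent, say $P^{\pi_1,\la_1}(u)=P^{\pi_2,\la_2}(u-b)$, then by the identity above $P^{\pi_1,\la_1}=P^{\pi_2,\la_2+b}$, and uniqueness forces $\pi_1=\pi_2$ (and $\la_1=\la_2+b$). Conversely, for any $\pi\in\mathscr{D}(m,n)$ and any two scalars $\la_1,\la_2\in\C$, the solutions $P^{\pi,\la_1}$ and $P^{\pi,\la_2}$ are equivalent via $b=\la_1-\la_2$. Therefore the map $(\pi,\la)\mapsto \pi$ descends to a bijection between the set of equivalence classes of irreducible solutions and $\mathscr{D}(m,n)$.

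Finally, invoking \cite[Theorem~2.4]{Fukukawa2013pre} to identify $\#\mathscr{D}(m,n)=\tfrac{1}{m+n}\binom{m+n}{m}$ completes the count. The only substantive step is the verification of the translation identity together with the extraction of both conclusions (same $\pi$, shift given by $\la_1-\la_2$) from the uniqueness statement of Theorem \ref{thm:main2}; everything else is bookkeeping.
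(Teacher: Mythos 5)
Your proposal is correct and follows essentially the same route as the paper, which obtains the corollary by combining the parametrization of irreducible solutions by $\mathscr{D}(m,n)\times\C$ from Theorem \ref{thm:main2} with the count of $\mathscr{D}(m,n)$ in \cite[Theorem~2.4]{Fukukawa2013pre}; the translation identity $P^{\pi,\la}(u-b)=P^{\pi,\la+b}(u)$ and the use of the uniqueness clause are exactly the (unstated) bookkeeping the paper has in mind.
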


This completes the classification of \emph{irreducible} solutions in $S(\al_1,\al_2)$. However, factorization into irreducible solutions is not unique, as the following example shows.

\begin{example}\label{ex:non-unique-factorization}
Let $\al_1=2$, $\al_2=-3$.
For a sequence $\un{i}\in\mathsf{M}(\al_1,\al_2)$  we put
\[\eta(\un{i})=(\al_{i_1}, \al_{i_1}+\al_{i_2},\ldots,\al_{i_1}+\al_{i_2}+\cdots+\al_{i_N})\in\Z^N,\]
where $N=\ell(\un{i})$.
With
\begin{align*}
\un{i} =(11122),\quad&\Rightarrow\quad \eta(\un{i})=(2,4,6,3,0),\\
\un{j} =(12211),\quad&\Rightarrow\quad \eta(\un{j})=(2,-1,-4,-2,0),\\
\un{i}'=(12112),\quad&\Rightarrow\quad \eta(\un{i}')=(2,-1,1,3,0)\\
\un{j}'=\un{i}',\quad&\Rightarrow\quad \eta(\un{j}')=(2,-1,1,3,0).
\end{align*}
and
\[\la=\la'=0,\; \mu=\mu'=-\al_2=3\]
Consider
\begin{align*}
F=P^{\un{i}}(u-\la)  &=\big( (u-3)u,    \; (u-2)(u-4)(u-6) \big), \\ 
G=P^{\un{j}}(u-\mu)  &=\big( (u-2)(u+1),\; (u-5)(u-1)(u-3) \big), \\ 
H=P^{\un{i}'}(u-\la')&=\big( (u+1)u,    \; (u-2)(u-1)(u-3) \big), \\ 
K=P^{\un{j}'}(u-\mu')&=\big( (u-2)(u-3),\; (u-5)(u-4)(u-6) \big).
\end{align*}
Then $F,G,H,K\in S(2,-3)$ and $FG=HK$. By Theorem \ref{thm:main2}, $F,G,H,K$ are all irreducible. It is easy to see that $F,G\not\sim H,K$. In this sense, factorization into irreducible elements (even up to equivalence) in the monoid $S(\al_1,\al_2)$ is not unique.
\end{example}

To resolve this issue, as mentioned in the introduction, we will start by factoring a solution into $c$-integral solutions (see Definition \ref{dfn:integral-sol}).
Then, in the next section we define a natural partial order and require irreducible factorizations of $c$-integral solutions to be ordered.

\begin{lem}\label{lem:main3-lemma}
Let $\al$ and $\be$ be non-zero complex numbers.
For any solution $(p,q)$ to \eqref{eq:main-equation}, in which $p$ and $q$ are non-zero and monic, there exists a unique finite subset $C\subseteq\C/(\Z\al+\Z\be)$, and unique $c$-integral solutions $(p_c,q_c)\neq (1,1)$ for $c\in C$ such that
\begin{equation}
(p,q)=\prod_{c\in C} (p_c,q_c).
\end{equation}
\end{lem}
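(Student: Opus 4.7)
The plan is to partition the roots of $p(u)q(u)$ according to the cosets of $\mathbb{Z}\alpha+\mathbb{Z}\beta$ in which they lie, and show that the equation decouples across cosets. Concretely, for each $c\in\mathbb{C}/(\mathbb{Z}\alpha+\mathbb{Z}\beta)$ I would define
\[
p_c(u)=\prod_{\lambda\in c}(u-\lambda)^{m_\lambda(p)},\qquad q_c(u)=\prod_{\lambda\in c}(u-\lambda)^{m_\lambda(q)},
\]
where $m_\lambda(f)$ denotes the multiplicity of $\lambda$ as a root of $f$. Only finitely many $c$ give a non-constant $p_c$ or $q_c$; let $C$ be that finite set. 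Monicity of $p,q$ yields the factorizations $p=\prod_c p_c$ and $q=\prod_c q_c$.

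The main step is to verify $(p_c,q_c)\in S(\alpha,\beta)$ for every $c$. The key (and essentially only) observation is that $\alpha,\beta\in\mathbb{Z}\alpha+\mathbb{Z}\beta$, so the translations $u\mapsto u+\alpha$ and $u\mapsto u+\beta$ preserve each coset. Hence $p_c(u+\beta)$ and $q_c(u+\alpha)$ still have all roots in $c$. Inserting the coset factorizations into $p(u)q(u)=p(u+\beta)q(u+\alpha)$ gives
\[
\prod_c \bigl[p_c(u)q_c(u)\bigr]=\prod_c \bigl[p_c(u+\beta)q_c(u+\alpha)\bigr].
\]
Since the factors associated to distinct cosets have disjoint root sets and are therefore pairwise coprime in $\mathbb{C}[u]$, unique factorization forces the equality to hold term by term: $p_c(u)q_c(u)=p_c(u+\beta)q_c(u+\alpha)$ for each $c$. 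By construction, $(p_c,q_c)$ is then a $c$-integral solution, and it is non-trivial precisely for $c\in C$.

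For uniqueness, suppose $(p,q)=\prod_{c\in C'}(p'_c,q'_c)$ is another such factorization, with each $(p'_c,q'_c)$ a non-trivial $c$-integral solution. Then $p=\prod_{c\in C'}p'_c$ with the $p'_c$ pairwise coprime (their roots lie in distinct cosets), so unique factorization in $\mathbb{C}[u]$ forces $p'_c$ to equal the product of exactly those linear factors of $p$ whose roots lie in $c$, i.e.\ $p'_c=p_c$; similarly $q'_c=q_c$. The equality $C'=C$ then follows from the requirement $(p'_c,q'_c)\neq(1,1)$.

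I do not anticipate a serious obstacle: the statement is essentially a bookkeeping result, and the only substantive input is the elementary observation that the shifts appearing in \eqref{eq:main-equation} preserve cosets modulo $\mathbb{Z}\alpha+\mathbb{Z}\beta$, which allows the coset decomposition to be upgraded from a factorization of $p$ and $q$ separately to a factorization of the pair $(p,q)$ inside the monoid $S(\alpha,\beta)$.
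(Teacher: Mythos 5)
Your proposal is correct and follows essentially the same route as the paper: define $p_c,q_c$ by grouping the linear factors whose roots lie in the coset $c$, observe that the shifts by $\alpha$ and $\beta$ preserve cosets of $\Z\al+\Z\be$, and compare factors coset by coset in $p(u)q(u)=p(u+\be)q(u+\al)$ to see that each $(p_c,q_c)$ solves the equation. Your explicit uniqueness argument via coprimality of the coset pieces is a welcome addition, since the paper's proof leaves that step implicit.
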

\begin{proof}
Let $\Ga=\Z\al+\Z\be$.
Let $Q=(Q_1,Q_2)\in S(\al,\be)$.
For $c\in \C/\Ga$, define $Q^c=(Q^c_1, Q^c_2)$ by
\[Q^c_i=\prod_{w\in Z(Q_i)\cap c} (u-w),\quad i=1,2,\]
where for a polynomial $P\in R$, $Z(P)$ is the multiset of zeros of $P$, with multiplicities.
Fix $c\in \C/\Ga$. Clearly $Q^c$ divides $Q$ in $R\times R$.
We claim that $Q^c\in S(\al,\be)$ for all $c\in \C/\Ga$. Write $Q=Q^c\cdot Q'$, where $Q'=(Q'_1,Q'_2)$.
Since $Q\in S(\al,\be)$,
\[Q_1(u)Q_2(u)=Q_1(u+\be)Q_2(u+\al)\]
hence
\[Q^c_1(u)Q^c_2(u)\cdot Q'_1(u)Q'_2(u)=Q^c_1(u+\be)Q^c_2(u+\al)\cdot Q'_1(u+\be)Q'_2(u+\al).\]
Let $u-w$ be any irreducible factor of $Q^c_1(u)Q^c_2(u)$. Thus $w\in c$. Then $u-w$ has to divide
$Q^c_1(u+\be)Q^c_2(u+\al)$ because those are the only factors in the right hand side with roots in $c$ (here we use that $c$ is a coset modulo $\Ga$). Conversely any irreducible factor of $Q^c_1(u+\be)Q^c_2(u+\al)$ has to be a factor of $Q^c_1(u)Q^c_2(u)$ in the left hand side. This shows that
\[Q^c_1(u)Q^c_2(u) = Q^c_1(u+\be)Q^c_2(u+\al)\]
i.e. $Q^c\in S(\al,\be)$.
Consequently we obtain
\[Q=\prod_{c\in\C/\Ga} Q^c\]
where $Q^c\in S(\al,\be)$ and every zero of $Q^c_1(u)Q^c_2(u)$ belongs to $c$ for all $c\in\C/\Ga$.
\end{proof}
\begin{rem}\label{rem:integral-factor}
 In particular, Lemma \ref{lem:main3-lemma} implies that if a solution $(p,q)=\prod_{c\in C}\left(p_c,q_c\right)$ is such that, for all $c\in C$ the solution $(p_c,q_c)$ is irreducible, then this factorization into irreducibles is unique.
\end{rem}

\section{Cylindrical Dyck paths and ordered factorizations}
\label{sec:ordered-factorizations}

Let $\al$ and $\be$ be non-zero complex numbers such that $m\al+n\be=0$ for a (unique) pair of relatively prime positive integers $m$ and $n$.
Let $\Ga=\Z\al+\Z\be$ and $c\in \C/\Ga$.

Let $Y=Y(m,n)$ be the discrete cylinder $Y=\Z^2/\Z(m,n)$. We will sometimes identify $Y$ with the quotient set $\iv{0}{m}\times\Z/\sim$ where $(m,y+n)\sim(0,y)$ for all $y\in\Z$.
For $(x,y)\in\Z^2$ we denote by $\overline{(x,y)}$ the image in $Y$ under the canonical map.

\begin{dfn} A \emph{cylindrical (generalized) Dyck path} $\overline{\pi}$ is a sequence of points on $Y$
\[\overline{\pi}=\overline{\pi}_{kl}=\left(\overline{(x_0+k,y_0+l)},\overline{(x_1+k,y_1+l)},\ldots,
\overline{(x_{m+n}+k,y_{m+n}+l)}\right)\]
where $\pi=\big((x_0,y_0),(x_1,y_1),\ldots,(x_{m+n},y_{m+n})\big)\in\mathscr{D}(m,n)$ and $(k,l)\in\iv{0}{m-1}\times\Z$. The point $(k,l)$ is called the \emph{base point} of $\overline{\pi}$. It is uniquely determined by $\overline{\pi}$ by Lemma \ref{lem:fuku}. The set of cylindrical Dyck paths is denoted by $\mathscr{D}^{\mathrm{cyl}}(m,n)$.
\end{dfn}

For fixed $z_0\in c$, which we refer to as a choice of \emph{origin}, there is a bijection
\begin{equation}\label{eq:cylindrical-bijection}
\begin{aligned}
\mathscr{D}(m,n)\times c&\longrightarrow \mathscr{D}^{\mathrm{cyl}}(m,n),\\
(\pi,z_0+k\al+l\be) &\longmapsto \overline{\pi}_{kl}.
\end{aligned}
\end{equation}

We now define a natural partial order on $\mathscr{D}^\mathrm{cyl}(m,n)$.
\begin{dfn}
Given $\overline{\pi},\overline{\tau}\in\mathscr{D}^{\mathrm{cyl}}(m,n)$ we define $\overline{\pi}\preceq\overline{\tau}$ if and only if $\overline{\pi}$ lies completely to the south-east of $\overline{\tau}$. That is, if and only if whenever $\overline{(x,y)}$ is a point on $\overline{\pi}$ and $\overline{(x',y')}$ is a point on $\overline{\tau}$ we have
\begin{equation}
\label{eq:partial-order-definition}
x+y=x'+y'\Longrightarrow y-x\le y'-x'.
\end{equation}
\end{dfn}

\begin{rem} Two cylindrical Dyck paths are comparable if and only if they are \emph{non-crossing}.
\end{rem}

We use the bijection \eqref{eq:cylindrical-bijection} to transfer the partial order to $\mathscr{D}(m,n)\times c$. The resulting order does not depend on the choice of $z_0$. Indeed, any other choice of origin $z_0'\in c$ is related to $z_0$ by $z_0'=z_0+r\al+s\be$ for some $r,s\in\Z$ since $c$ is a coset in $\C/\Ga$. Then the corresponding coordinates of points on cylindrical Dyck paths are all translated by a common vector $\overline{(r,s)}$ which does not change \eqref{eq:partial-order-definition}.

\begin{rem} \label{rem:choice-of-origin}
Given a finite subset $\{(\pi_1,\la_1),\ldots,(\pi_r,\la_r)\}\subset\mathscr{D}(m,n)\times c$ there is a natural choice of origin $z_0$. Namely $z_0=\min\{\la_1,\ldots,\la_r\}$ with respect to the total order on $c$ in which $\la\le\mu$ if and only if $\xi(\mu-\la)\ge 0$, where $\xi:\Ga\to\Z$ is the bijection given by $\xi(r\al+s\be)=r+sm$ where $(r,s)\in\iv{0}{m-1}\times\Z$. Then $\la_j\in z_0+\iv{0}{m-1}\al+\Z_{\ge 0}\be$ for all $j$.
\end{rem}

In the following examples we will use the coordinate map
\begin{equation}
\kappa=\kappa_{z_0}:c\to Y,\qquad \kappa(z_0+k\al+l\be)=\overline{(k,l)}.
\end{equation}

\begin{example}
Let $(\al,\be)=(-5,3)$, so that $(m,n)=(3,5)$ and we fix $z_0=0\in\Z\al+\Z\be$. Let $\pi_1=\pi(22212211)$, $\la_1=-10=2\al_1$, $\pi_2=\pi(22222111)$, $\la_2=-17=\al_1-4\al_2$, $\pi_3=\pi(22221211)$ and $\la_3=-16 =2\al_1-2\al_2$. We draw the cylindrical Dyck paths corresponding to $(\pi_1,\la_1)$ (red filled dots), $(\pi_2,\la_2)$ (blue squares), and $(\pi_3,\la_3)$ (green circles) below.

\begin{center}

\begin{tikzpicture}[xscale=1,yscale=0.8]
\draw[-] (0,6) to (0,-5);
\draw[dashed] (3,6) to (3,-5);
\draw (0,0) node[anchor=east] {$\overline{(0,0)}=\kappa(0)$};
\draw[-] (0,0) to (0,-2) [red];
\draw[-] (0,0) to (2,0) [red];
\draw[-] (2,0) to (2,3) [red];
\draw[-] (2,3) to (3,3) [red];

\draw[-] (0,-4) to (1,-4) [blue];
\draw[-] (1,-4) to (1,1) [blue];
\draw[-] (1,1) to (3,1) [blue];
\draw (1,-4) node[anchor=north] {$\kappa(\la_2)$};

\draw (2,0) node[anchor=west] {$\kappa(\la_1)$};

\draw (2,-2) node[anchor=west] {$\kappa(\la_3)$};
\draw (0,-3) to (0,-2) [OliveGreen];
\draw (0,-2) to (2,-2) [OliveGreen];
\draw (2,-2) to (2,0) [OliveGreen];
\draw (2.02,0) to (2.02,2) [OliveGreen];
\draw (2,2) to (3,2) [OliveGreen];

\fill[red] (0,0) circle(2pt);
\fill[red] (1,0) circle(2pt);
\fill[red] (2,0) circle(2pt);
\fill[red] (2,1) circle(2pt);
\fill[red] (2,2) circle(2pt);
\fill[red] (2,3) circle(2pt);
\fill[red] (3,3) circle(2pt);
\fill[red] (0,-1) circle(2pt);
\fill[red] (0,-2) circle(2pt);
\draw[OliveGreen] (0,-3) circle(2.5pt);
\draw[OliveGreen] (0,-2) circle(2.5pt);
\draw[OliveGreen] (1,-2) circle(2.5pt);
\draw[OliveGreen] (2,-2) circle(2.5pt);
\draw[OliveGreen] (2,-1) circle(2.5pt);
\draw[OliveGreen] (2,0) circle(2.5pt);
\draw[OliveGreen] (2,1) circle(2.5pt);
\draw[OliveGreen] (2,2) circle(2.5pt);
\draw[OliveGreen] (3,2) circle(2.5pt);
\draw[blue][xshift=-2.5pt,yshift=-2.5pt] (0,-4) rectangle++ (5pt,5pt);
\draw[blue][xshift=-2.5pt,yshift=-2.5pt] (1,-4) rectangle++ (5pt,5pt);
\draw[blue][xshift=-2.5pt,yshift=-2.5pt] (1,-3) rectangle++ (5pt,5pt);
\draw[blue][xshift=-2.5pt,yshift=-2.5pt] (1,-2) rectangle++ (5pt,5pt);
\draw[blue][xshift=-2.5pt,yshift=-2.5pt] (1,-1) rectangle++ (5pt,5pt);
\draw[blue][xshift=-2.5pt,yshift=-2.5pt] (1,0) rectangle++ (5pt,5pt);
\draw[blue][xshift=-2.5pt,yshift=-2.5pt] (1,1) rectangle++ (5pt,5pt);
\draw[blue][xshift=-2.5pt,yshift=-2.5pt] (2,1) rectangle++ (5pt,5pt);
\draw[blue][xshift=-2.5pt,yshift=-2.5pt] (3,1) rectangle++ (5pt,5pt);

\end{tikzpicture}
\end{center}

We can see in this case that $(\pi_3,\la_3)\preceq (\pi_1,\la_1)$ while $(\pi_2,\la_2)$ is not comparable to either of the other two paths.
If we made the natural choice of origin as in Remark \ref{rem:choice-of-origin}, then $z_0=-17$ and we would then redraw the picture as follows.

\begin{center}
\begin{tikzpicture}[xscale=1,yscale=0.8]
\draw[-] (0,11) to (0,0);
\draw[dashed] (3,11) to (3,0);
\draw (3,5) node[anchor=west] {$\overline{(3,5)}=\overline{(0,0)}$};
\draw (0,0) node[anchor=east] {$\overline{(0,0)}=\kappa(\la_2)$};
\draw[-] (0,4) to (1,4) [red];
\draw[-] (1,4) to (1,7) [red];
\draw[-] (1,7) to (2,7) [red];
\draw[-] (2,7) to (2,9) [red];
\draw[-] (2,9) to (3,9) [red];

\draw[-] (0,0) to (0,5) [blue];
\draw[-] (0,5) to (3,5) [blue];

\draw (1,4) node[anchor=west] {$\kappa(\la_1)$};

\draw (1,2) node[anchor=west] {$\kappa(\la_3)$};
\draw (1,2) to (1,4) [OliveGreen];
\draw (1,6) to (2,6) [OliveGreen];
\draw (2,6) to (2,7) [OliveGreen];
\draw (1.02,4) to (1.02,6) [OliveGreen];
\draw (2,7) to (3,7) [OliveGreen];
\draw (0,2) to (1,2) [OliveGreen];

\fill[red] (0,4) circle(2pt);
\fill[red] (1,4) circle(2pt);
\fill[red] (1,5) circle(2pt);
\fill[red] (1,6) circle(2pt);
\fill[red] (1,7) circle(2pt);
\fill[red] (2,7) circle(2pt);
\fill[red] (2,8) circle(2pt);
\fill[red] (2,9) circle(2pt);
\fill[red] (3,9) circle(2pt);
\draw[OliveGreen] (1,2) circle(2.5pt);
\draw[OliveGreen] (0,2) circle(2.5pt);
\draw[OliveGreen] (1,3) circle(2.5pt);
\draw[OliveGreen] (1,4) circle(2.5pt);
\draw[OliveGreen] (1,5) circle(2.5pt);
\draw[OliveGreen] (1,6) circle(2.5pt);
\draw[OliveGreen] (2,6) circle(2.5pt);
\draw[OliveGreen] (2,7) circle(2.5pt);
\draw[OliveGreen] (3,7) circle(2.5pt);
\draw[blue][xshift=-2.5pt,yshift=-2.5pt] (0,0) rectangle++ (5pt,5pt);
\draw[blue][xshift=-2.5pt,yshift=-2.5pt] (0,1) rectangle++ (5pt,5pt);
\draw[blue][xshift=-2.5pt,yshift=-2.5pt] (0,2) rectangle++ (5pt,5pt);
\draw[blue][xshift=-2.5pt,yshift=-2.5pt] (0,3) rectangle++ (5pt,5pt);
\draw[blue][xshift=-2.5pt,yshift=-2.5pt] (0,4) rectangle++ (5pt,5pt);
\draw[blue][xshift=-2.5pt,yshift=-2.5pt] (0,5) rectangle++ (5pt,5pt);
\draw[blue][xshift=-2.5pt,yshift=-2.5pt] (1,5) rectangle++ (5pt,5pt);
\draw[blue][xshift=-2.5pt,yshift=-2.5pt] (2,5) rectangle++ (5pt,5pt);
\draw[blue][xshift=-2.5pt,yshift=-2.5pt] (3,5) rectangle++ (5pt,5pt);

\end{tikzpicture}
\end{center}

Notice that the partial order of the three cylindrical Dyck paths has not been affected by the different choice of origin.
\end{example}

We can now prove the third and final main theorem from the introduction.

\begin{thm}\label{thm:main3}
Let $\al$ and $\be$ be non-zero complex numbers. Assume that $m\al+n\be=0$ for some relatively prime positive integers $m$ and $n$. Let $c\in \C/(\Z\al+\Z\be)$ and let $(p,q)$ be a $c$-integral solution to \eqref{eq:main-equation}. Then there exist a unique $k\in\Z_{\ge 0}$ and a unique sequence $\big((\pi_1,\la_1),(\pi_2,\la_2),\ldots,(\pi_k,\la_k)\big)$ of elements of $\mathscr{D}(m,n)\times c$ with
\begin{equation}
(\pi_1,\la_1)\preceq (\pi_2,\la_2)\preceq\cdots\preceq (\pi_k,\la_k)
\end{equation}
such that
\begin{equation}
(p,q)=
\left(P^{\pi_1,\la_1}_1,\, P^{\pi_1,\la_1}_2 \right)
\left(P^{\pi_2,\la_2}_1,\, P^{\pi_2,\la_2}_2 \right)
 \cdots
\left(P^{\pi_k,\la_k}_1,\, P^{\pi_k,\la_k}_2 \right),
\end{equation}
where the irreducible solutions $(P^{\pi,\la}_1,P^{\pi,\la}_2)$ for $(\pi,\la)\in \mathscr{D}(m,n)\times \C$ were defined in \eqref{eq:thm2-solutions}.
\end{thm}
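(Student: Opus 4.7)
My plan is to induct on $\deg(p)+\deg(q)$, reducing both existence and uniqueness to one key claim: every non-trivial $c$-integral solution $(p,q) \in S(\al,\be)$ has a \emph{unique} $\preceq$-minimum irreducible divisor $P^{\pi_1,\la_1}$. The base case $(p,q)=(1,1)$ is trivial with $k=0$.

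\textbf{Cylindrical picture and greedy construction.} Fix an origin $z_0 \in c$ as in Remark \ref{rem:choice-of-origin} and use $\kappa_{z_0}: c \to Y$ to transfer the situation onto the cylinder $Y = \Z^2/\Z(m,n)$. The zeros of $p$ and $q$ (with multiplicity) become two multisets of lattice points on $Y$ that label rightward-steps and upward-steps respectively; an irreducible divisor $P^{\pi,\la}$ then corresponds to a cylindrical Dyck path with base point $\kappa(\la)$ whose steps are paid for by these labelled zeros. To produce the $\preceq$-minimum, I would modify the algorithm of Proposition \ref{prp:factorization}: start from the root $\la_0$ of $pq$ whose image $\kappa(\la_0)$ is the most south-easterly on $Y$ with respect to the total order of Remark \ref{rem:choice-of-origin}, and at each subsequent step, whenever both a rightward continuation (a zero of $p$ at $\la_k+\al$) and an upward continuation (a zero of $q$ at $\la_k+\be$) are available, always take the rightward one. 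Corollary \ref{cor:irreducible-fundamental-solution} guarantees the procedure closes up after exactly $m+n$ steps, yielding an irreducible divisor $P^{\pi_1,\la_1}$.

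\textbf{Uniqueness of the minimum.} I prove $(\pi_1,\la_1) \preceq (\pi,\la)$ for every other irreducible divisor $P^{\pi,\la}$ of $(p,q)$ by a diagonal-by-diagonal exchange argument on the cylinder. Comparing the two cylindrical Dyck paths along each anti-diagonal $x+y = \mathrm{const}$, the right-preferring greedy construction forces the $\pi_1$-path to have the smaller value of $y-x$ on every anti-diagonal: any point where $\pi$ takes an up-step while the greedy path has already committed to a right-step leaves $\pi$ with a larger $y-x$ thereafter until a compensating swap occurs, and the availability of that swap within the shared zero configuration can be traced back precisely to the fact that both paths divide $(p,q) \in S(\al,\be)$. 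This is exactly condition \eqref{eq:partial-order-definition}, so $(\pi_1,\la_1) \preceq (\pi,\la)$. Antisymmetry of $\preceq$ on cylindrical Dyck paths then yields uniqueness.

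\textbf{Induction step.} By Proposition \ref{prp:elementary-properties}\eqref{part:divisibility-property} applied to $(p,q)=P^{\pi_1,\la_1}\cdot (p',q')$, the quotient $(p',q')$ lies in $S(\al,\be)$; it is still $c$-integral and has strictly smaller total degree. By the inductive hypothesis $(p',q')$ has a unique $\preceq$-ordered factorization $(\pi_2,\la_2) \preceq \cdots \preceq (\pi_k,\la_k)$, and minimality of $(\pi_1,\la_1)$ among \emph{all} irreducible divisors of $(p,q)$ forces $(\pi_1,\la_1) \preceq (\pi_2,\la_2)$, giving existence. For uniqueness, the first factor of any $\preceq$-ordered factorization of $(p,q)$ is itself a $\preceq$-minimum irreducible divisor, hence equals $(\pi_1,\la_1)$ by the key claim; the rest is then forced by induction. \textbf{The main obstacle} is the exchange argument in the uniqueness of the minimum: one must carefully track shared zeros and multiplicities in the cylindrical picture, and verify that whenever the competing path $\pi$ deviates from the greedy choice, the consistency relation enforced by $(p,q) \in S(\al,\be)$ guarantees $\pi$ stays (weakly) north-west of the greedy path at every anti-diagonal.
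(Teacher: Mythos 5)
Your overall strategy coincides with the paper's: build the south-east-most irreducible factor by a greedy walk on the cylinder that prefers right steps, divide it out, and induct (the paper inducts on the number of irreducible factors and works relative to a chosen factorization $P^{\tau_1,\mu_1}\cdots P^{\tau_k,\mu_k}$, constructing the unique path that is $\preceq$ all $(\tau_i,\mu_i)$ and divides $Q$). But the two steps that carry all the weight in your version are not actually established. First, closure of the walk is not a consequence of Corollary \ref{cor:irreducible-fundamental-solution}: that corollary only says an irreducible fundamental solution has length $m+n$; it does not say your deterministic walk returns to its starting root. The argument of Proposition \ref{prp:factorization} shows the walk eventually cycles and (via Proposition \ref{prp:irred}) that the cycle has length $m+n$, but a priori the cycle could miss the chosen starting root $\lambda_0$, in which case the base point of the divisor you produce is no longer the minimal root. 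This matters because the minimality of $\lambda_0$ is exactly what must rule out an irreducible divisor lying \emph{strictly} south-east of your greedy path: two such paths never touch, so no exchange or crossing argument applies to them. The correct argument there is that a vertex strictly south-east of $\kappa(\lambda_0)$ on the same anti-diagonal would be a zero of $pq$ whose canonical representative on the cylinder has strictly smaller second coordinate, contradicting the choice of $\lambda_0$ (Remark \ref{rem:choice-of-origin}). Your sketch of ``uniqueness of the minimum'' never invokes the choice of $\lambda_0$, so it cannot be complete; the touching case (once the paths meet, the greedy path cannot turn up where the competitor turns right, since that right-neighbour is then a zero of the relevant component) only yields ``either comparable with greedy below, or strictly south-east everywhere''.

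Second, the uniqueness step contains a non sequitur: you assert that the first factor of an arbitrary $\preceq$-ordered factorization is itself a $\preceq$-minimum irreducible divisor of $(p,q)$. It is only minimal among the factors occurring in \emph{that} factorization, and, as Example \ref{ex:non-unique-factorization} shows, an irreducible divisor need not occur among them; so ``hence equals $(\pi_1,\lambda_1)$'' requires a genuine argument. One can repair it (show the first factor passes through $\kappa(\lambda_0)$ because the minimal zero lies on some factor $\sigma_j$ and $\sigma_1\preceq\sigma_j$, then show $\sigma_1$ cannot be strictly north-west of the greedy path at any level because every greedy vertex lies on some $\sigma_j$), but this is essentially the paper's construction and uniqueness argument, not something your proposal supplies. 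Finally, a small but real slip: by \eqref{eq:thm2-solutions} (equivalently Lemma \ref{lem:fundamental-Dyck-solutions}) right steps correspond to zeros of the \emph{second} component $q$ and up steps to zeros of $p$; your labelling is reversed, and taken literally (``a rightward continuation is a zero of $p$ at $\lambda_k+\alpha$'') the walk would not produce a divisor of $(p,q)$.
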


\begin{proof}
Put $Q=(p,q)$. By Theorem \ref{thm:main2}, we have
$Q=P^{\tau_1,\mu_1}\cdots P^{\tau_k,\mu_k}$ for some $(\tau_i,\mu_i)\in\mathscr{D}(m,n)\times c$.
We will show that there exists a unique $k$-tuple
$\big((\pi_1,\la_1),\ldots,(\pi_k,\la_k)\big)$ of elements of $\mathscr{D}(m,n)\times c$ such that
\begin{equation}\label{eq:proof-main3-P1}
(\pi_1,\la_1)\preceq (\pi_2,\la_2)\preceq
\cdots \preceq (\pi_k,\la_k)
\end{equation}
and
\begin{equation} \label{eq:proof-main3-P2}
P^{\pi_1,\la_1}\cdots P^{\pi_r,\la_k}=
P^{\tau_1,\mu_1}\cdots P^{\tau_r,\mu_k}.
\end{equation}
We will use induction on $k$. For $k=1$,\eqref{eq:proof-main3-P2} implies that the only choice is $(\pi_1,\la_1)=(\tau_1,\mu_1)$ since $(\pi,\la)$ is uniquely determined by $P^{\pi,\la}$. Assume that $k>1$.
Then we claim that there exists a unique $(\pi_1,\la_1)\in\mathscr{D}(m,n)\times c$ with the properties
\begin{equation}\label{eq:proof-main3-pi1-minimal}
(\pi_1,\la_1)\preceq (\tau_i,\mu_i)\quad\text{for all $i\in\iv{1}{k}$,}
\end{equation}
and
\begin{equation}\label{eq:proof-main3-pi1-divides}
\text{$P^{\pi_1,\la_1}$ divides $Q$ in $S(\al,\be)$.}
\end{equation}
To this end, choose $z_0\in c$ to be the minimum of $\{\mu_1,\ldots,\mu_k\}$ as in Remark \ref{rem:choice-of-origin}. Let $\overline{\tau}_i$ be the image of $(\tau_i,\mu_i)$ under \eqref{eq:cylindrical-bijection}.
We define $(\pi_1,\la_1)$ by constructing its image $\overline{\pi_1}=\left(\overline{(x_0,y_0)},\ldots,\overline{(x_{m+n},y_{m+n})}\right)$ under \eqref{eq:cylindrical-bijection}. First put $\overline{(x_0,y_0)}=\overline{(0,0)}$. Then recursively, assume that we have defined $\overline{(x_i,y_i)}$ for some $i\in\iv{0}{m+n-1}$. If $\overline{(x_i+1,y_i)}$ is a point of at least one of the cylindrical Dyck paths $\overline{\tau}_j$ for $j\in\iv{1}{k}$, then we let $\overline{(x_{i+1},y_{i+1})}=\overline{(x_i+1,y_i)}$. Otherwise we let $\overline{(x_{i+1},y_{i+1})}=\overline{(x_i,y_i+1)}$.
Then \eqref{eq:proof-main3-pi1-minimal} is satisfied since at every step we chose to go right whenever possible, and \eqref{eq:proof-main3-pi1-divides} holds by comparing zeroes since every point of $\overline{\pi_1}$ is also a point of some $\overline{\tau_j}$. The uniqueness follows from the fact that \eqref{eq:proof-main3-pi1-minimal} forces us to choose right if possible, while \eqref{eq:proof-main3-pi1-divides} forces us to choose a point of some $\overline{\tau_j}$ at every step. This proves the claim about $(\pi_1,\la_1)$.

By Proposition \ref{prp:elementary-properties}~\eqref{part:divisibility-property} it follows that
$\tilde{Q}=Q/P^{\pi_1,\la_1}$ is also a solution with roots in $c$. By the induction hypothesis, it follows that $\tilde{Q}$ has a unique ordered factorization into irreducible shifted fundamental solutions: 
$\tilde{Q}=P^{\pi_2,\la_2}\cdots P^{\pi_k,\la_k}$,
where $(\pi_2,\la_2)\preceq\cdots\preceq(\pi_k,\la_k)$.
By \eqref{eq:proof-main3-pi1-minimal} we have $(\pi_1,\la_1)\preceq (\pi_j,\la_j)$ for $j\in\iv{2}{k}$. This finishes the proof.
\end{proof}

\begin{example}
Let $(\al,\be)=(2,-3)$, so that $(m,n)=(3,2)$ and we fix $z_0=0$. 
We first draw the cylindrical Dyck paths corresponding to $(\tau_1,\mu_1)$ (red filled dots) and $(\tau_2,\mu_2)$ (green circles), where $\tau_1=\pi(22111)$, $\mu_1=6=-2\al_2$, $\tau_2=\pi(22111)$, $\mu_2=-1=\al_1-\al_2$, 

\begin{center}
\begin{tikzpicture}[xscale=1,yscale=1]
\draw[-] (0,3) to (0,-3);
\draw[dashed] (3,3) to (3,-3);
\draw (0,0) node[anchor=east] {$\overline{(0,0)}=\kappa(0)$};
\fill (0,0) circle(2pt);
\fill (3,0) circle(2pt);
\draw (1,-1) node[anchor=west] {$\kappa(\mu_2)$};
\draw[-] (0,0) to (3,0) [red];
\draw[-] (0,0) to (0,-2) [red];
\draw (0,-2) node[anchor=east] {$\overline{(0,-2)}=\kappa(\mu_1)$};
\draw (0,-1) to (1,-1) [OliveGreen];
\draw (1,-1) to (1,1) [OliveGreen];
\draw (1,1) to (3,1) [OliveGreen];
\fill[red] (0,-1) circle(2pt);
\fill[red] (0,-2) circle(2pt);
\fill[red] (0,0) circle(2pt);
\fill[red] (1,0) circle(2pt);
\fill[red] (2,0) circle(2pt);
\fill[red] (3,0) circle(2pt);
\draw[OliveGreen] (0,-1) circle(3pt);
\draw[OliveGreen] (1,-1) circle(3pt);
\draw[OliveGreen] (1,0) circle(3pt);
\draw[OliveGreen] (1,1) circle(3pt);
\draw[OliveGreen] (2,1) circle(3pt);
\draw[OliveGreen] (3,1) circle(3pt);

\draw (3,0) node[anchor=west] {$\overline{(m,n-2)}= \overline{(0,-2)}$};
\end{tikzpicture}
\end{center}

In this second diagram we have the cylindrical Dyck paths corresponding to $(\pi_1,\la_1)$ (red filled dots) and $(\pi_2,\la_2)$ (green circles) with $\pi_1=\pi(21211)$, $\la_1=3=-\al_2$, $\pi_2=\pi(21211)$, $\la_2=6=-2\al_2$.

\begin{center}
\begin{tikzpicture}[xscale=1,yscale=1]
\draw[-] (0,3) to (0,-3);
\draw[dashed] (3,3) to (3,-3);
\draw (0,0) node[anchor=east] {$\overline{(0,0)}=\kappa(0)$};
\draw (0,-1) node[anchor=east] {$\overline{(0,-1)}=\kappa(\la_1)$};
\draw (0,-2) node[anchor=east] {$\overline{(0,-2)}=\kappa(\la_2)$};
\draw[-] (0,-1) to (0,0) [red];
\draw[-] (0,0) to (1,0) [red];
\draw[-] (1,0) to (1,1) [red];
\draw[-] (1,1) to (3,1) [red];

\draw (0,-2) to (0,-1) [OliveGreen];
\draw (0,-1) to (1,-1) [OliveGreen];
\draw (1,-1) to (1,0) [OliveGreen];
\draw (1,0) to (3,0) [OliveGreen];
\fill[red] (0,-1) circle(2pt);
\fill[red] (0,0) circle(2pt);
\fill[red] (1,0) circle(2pt);
\fill[red] (1,1) circle(2pt);
\fill[red] (2,1) circle(2pt);
\fill[red] (3,1) circle(2pt);
\draw[OliveGreen] (0,-2) circle(3pt);
\draw[OliveGreen] (0,-1) circle(3pt);
\draw[OliveGreen] (1,-1) circle(3pt);
\draw[OliveGreen] (1,0) circle(3pt);
\draw[OliveGreen] (2,0) circle(3pt);
\draw[OliveGreen] (3,0) circle(3pt);

\end{tikzpicture}
\end{center}

Note that $(\pi_2,\la_2)\preceq (\pi_1,\la_1)$, while $(\tau_1,\mu_1)$ and $(\tau_2,\mu_2)$ are not comparable. In addition, these pictures represent the solutions of Example \ref{ex:non-unique-factorization}, in fact $F=P^{\tau_1,\mu_1}$, $G=P^{\tau_2,\mu_2}$, $H=P^{\pi_1,\la_1}$, and $K=P^{\pi_2,\la_2}$.
Thus the unique ordered factorization into irreducibles for the solution of Example \ref{ex:non-unique-factorization} is $KH$.
\end{example}

\section{Relation to previous solutions}
\label{sec:previous-solutions}

In \cite{HarSer2015pre} a family of solutions to the consistency equations \eqref{eq:TGWA-consistency} were attached to any multiquiver. The case when the quiver has only two vertices, connected by a single edge, corresponds exactly to the univariate rank two case considered in the present paper. In this section we will give the factorization into irreducibles for those solutions. It turns out that all irreducible factors are associated to certain special generalized Dyck paths.

For positive integers $m$ and $n$, recall that $\mathscr{D}(m,n)$ denotes the set of all generalized Dyck paths in $\Z^2$ from $(0,0)$ to $(m,n)$.
The \emph{area} of a generalized Dyck path $\pi\in \mathscr{D}(m,n)$,   denoted $\mathrm{Area}(\pi)$, is defined as the number of complete lattice cells contained between $\pi$ and the line $y=(n/m)x$.

\begin{lem}
Let $\al_1$ and $\al_2$ be two non-zero integers of opposite sign and put $N=|\al_1|+|\al_2|$.
\begin{enumerate}[{\rm (a)}]
\item There exists a unique sequence $\un{i}_0=(i_1i_2\cdots i_N)\in\mathsf{M}(\al_1,\al_2)$ with the property that
\begin{equation}\label{eq:zero-area-condition}
\al_{i_1}+\al_{i_2}+\cdots+\al_{i_k}\in\iv{0}{N-1}\quad\forall k\in\iv{1}{N}.
\end{equation}
\item The corresponding lattice path $\pi_0=\pi(\un{i}_0)$ is the unique Dyck path in $\mathscr{D}(|\al_2|,|\al_1|)$ with zero area.
\end{enumerate}
\end{lem}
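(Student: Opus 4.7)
For part (a), my plan is to construct $\un{i}_0$ one step at a time, with existence and uniqueness forced at each step. Start with $\eta_0 = 0$ and work in the case $\al_1 > 0 > \al_2$ (the other sign is symmetric). Given $\eta_{k-1} \in \iv{0}{N-1}$, I would check that $\eta_{k-1} + \al_1$ stays in $\iv{0}{N-1}$ precisely when $\eta_{k-1} \le |\al_2| - 1$ and $\eta_{k-1} + \al_2$ stays in $\iv{0}{N-1}$ precisely when $\eta_{k-1} \ge |\al_2|$; these two subranges partition $\iv{0}{N-1}$, so exactly one choice of $i_k$ is admissible. To confirm that the resulting length-$N$ sequence lies in $\mathsf{M}(\al_1,\al_2)$, I would exploit the congruence $\al_1 \equiv \al_2 \pmod{N}$ (which holds because $|\al_1|+|\al_2| = N$ with opposite signs) to conclude $\eta_N \equiv N\al_1 \equiv 0 \pmod{N}$; combined with $\eta_N \in \iv{0}{N-1}$, this forces $\eta_N = 0$.

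For part (b), I would work in the case $\al_1 < 0 < \al_2$, setting $p = |\al_1|$ and $q = |\al_2|$; the opposite-sign case reduces to this one by the $\al_1 \leftrightarrow \al_2$ swap, which reflects the path across $y = x$. From $\ell_1\al_1 + \ell_2\al_2 = 0$ together with $\ell_1 + \ell_2 = N$, one reads off $(\ell_1,\ell_2) = (q,p)$, so $\pi_0$ ends at $(q,p) = (|\al_2|,|\al_1|)$; and the bound $\eta_k = qy_k - px_k \ge 0$ coming from (a) is exactly the Dyck inequality $qy_k \ge px_k$, so $\pi_0 \in \mathscr{D}(q,p)$. The forced rule from (a), namely that $i_k = 1$ exactly when $\eta_{k-1} \ge p$, translates into path language as ``take a right-step precisely when the new point still satisfies the Dyck inequality $qy \ge px$''. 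Hence $\pi_0$ is the greedy or ``lowest'' Dyck path, and the height of its horizontal edge at column $x$ is the minimum legal value $h(x) = \lceil p(x+1)/q\rceil$.

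The area claim then follows from a direct cell count: any path $\pi \in \mathscr{D}(q,p)$ with column-heights $h(x)$ has total area $\sum_{x=0}^{q-1}\bigl(h(x) - \lceil p(x+1)/q\rceil\bigr)$, a sum of non-negative integers, so zero area is equivalent to $h(x) = \lceil p(x+1)/q\rceil$ for every $x$. This simultaneously shows $\mathrm{Area}(\pi_0) = 0$ and characterizes the zero-area Dyck path uniquely. The main obstacle I foresee is the sign-convention tension in the statement itself: when $\al_1 > 0 > \al_2$ the partial sums $\eta_k$ remain non-negative, which places $\pi_0$ below rather than above the line $y = (|\al_1|/|\al_2|)x$, so $\pi_0$ is not literally a member of $\mathscr{D}(|\al_2|,|\al_1|)$ under Definition \ref{dfn:lattice-path}. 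I would handle this cleanly by specializing to the canonical case $\al_1 < 0 < \al_2$ and invoking the $\al_1 \leftrightarrow \al_2$ symmetry to recover the other sign case.
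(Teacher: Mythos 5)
Your proposal is correct, and for part (a) it is essentially the paper's own argument: the two subintervals of $\iv{0}{N-1}$ force the choice of $i_k$ at every step, giving existence and uniqueness at once (the paper works with $\al_1<0<\al_2$, you with the opposite sign, which is immaterial). Your extra step via the congruence $\al_1\equiv\al_2\pmod{N}$, forcing $\eta_N\equiv 0$ and hence $\eta_N=0$, is a welcome addition: the paper's proof only checks that the partial sums stay in range and leaves membership in $\mathsf{M}(\al_1,\al_2)$ implicit, so you are filling a genuine (if small) gap. For part (b) your route is a more quantitative version of the paper's: the paper rewrites the condition $0\le x\al_1+y\al_2\le N-1$ as ``each lattice point of the path lies weakly above the line and its translate by $(1,-1)$ lies strictly below it'' and then simply asserts that this characterizes the unique zero-area path, whereas you identify $\pi_0$ as the greedy path with column heights $h(x)=\lceil p(x+1)/q\rceil$ and prove the formula $\mathrm{Area}(\pi)=\sum_{x=0}^{q-1}\bigl(h(x)-\lceil p(x+1)/q\rceil\bigr)$, which yields $\mathrm{Area}(\pi_0)=0$ and uniqueness simultaneously; this buys an explicit area formula and makes the uniqueness claim a transparent count rather than an assertion, at the cost of two small verifications you should include in a full write-up (that the greedy path's horizontal edge in column $x$ really sits at height $\lceil p(x+1)/q\rceil$, by induction on $x$, and the cell count per column, where a cell whose lower-right corner lies on the line must be counted as ``between'' the path and the line --- this matters exactly when $\gcd(|\al_1|,|\al_2|)>1$, and it is also what the paper's strict inequality for $(x+1,y-1)$ encodes). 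Finally, the sign-convention tension you flag is real: for $\al_1>0>\al_2$ the constructed path stays weakly below the line, so statement (b) is literally accurate only for $\al_1<0<\al_2$; the paper hides this behind ``the other case being symmetric,'' and your resolution --- prove the canonical case and transport the other one by the $\al_1\leftrightarrow\al_2$ swap, i.e.\ reflection across $y=x$ --- is the right reading, provided you state that in the reflected case the conclusion is the reflected (below-diagonal) zero-area path.
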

\begin{proof}
We will assume $\al_1<0$ and $\al_2>0$, the other case being symmetric.

(a) For $k=1$ we clearly must choose $i_1=2$. For $k>1$ note that we have a disjoint decomposition
\[\iv{0}{N-1}=\iv{0}{-\al_1-1}\cup \iv{-\al_1}{N-1}.\]
If $\al_{i_1}+\cdots+\al_{i_{k-1}}$ belongs to the first interval, adding $\al_1$ would bring us outside of the interval $\iv{0}{N-1}$. Similarly, if it belongs to the second interval we cannot add $\al_2$ because $\al_2-\al_1=N>N-1$. Therefore the only possibility is to define
\[i_{k}=\begin{cases}
1,& \text{if $\al_{i_1}+\cdots+\al_{i_{k-1}}\in \iv{-\al_1}{N-1}$}, \\ 
2,& \text{if $\al_{1_1}+\cdots+\al_{i_{k-1}}\in \iv{0}{-\al_1-1}$}.
\end{cases}\]
With this choice, the condition $\al_{i_1}+\cdots+\al_{i_k}\in\iv{0}{N-1}$ is indeed satisfied.

(b) Let $\pi=\pi(\un{i})$ be the lattice path corresponding to a sequence $\un{i}\in\mathsf{M}(\al_1,\al_2)$, starting at $(0,0)$ and ending at $(\al_2,-\al_1)$. Condition \eqref{eq:zero-area-condition} is equivalent to that
\begin{equation}\label{eq:zero-area-inequalities}
 0\le x\al_1+y\al_2 \le N-1
\end{equation}
for all lattice points $(x,y)$ belonging to $\pi$.
Since $N=\al_2-\al_1$, the inequalities \eqref{eq:zero-area-inequalities} are equivalent to that $(x,y)$ lies above and $(x+1,y-1)$ lies below the line $y=(-\al_1/\al_2)x$. This in turn means precisely that $\pi$ is the unique Dyck path with zero area. 
\end{proof}

In \cite{HarSer2015pre}, to any matrix satisfying some conditions (being the incidence matrix of a multiquiver), a ring $R$, certain automorphisms $\si_i$ and elements $t_i\in R$ were constructed such that the consistency equations \eqref{eq:TGWA-consistency} hold. If we consider the case of an $1\times 2$ matrix $[\al_1\;\al_2]$, the condition is that $\al_1$ and $\al_2$ are non-zero integers of opposite sign and we obtain the following: $R=\C[u]$, $\si_i(u)=u-\al_i$, and for $i=1,2$,
\begin{equation}\label{eq:HS-solutions}
t_i=\begin{cases}
u(u+1)\cdots (u+\al_i-1),& \al_i>0,\\
(u-1)(u-2)\cdots(u-|\al_i|),& \al_i<0.
\end{cases}
\end{equation}
By \cite[Theorem~2.4]{HarSer2015pre}, the pair of polynomials $(t_1,t_2)$ is a solution to \eqref{eq:main-equation}.
The following proposition shows that the factorization of these solutions into irreducibles is given by fundamental solutions associated to generalized Dyck paths of zero area.
Below we put $P^\pi=P^{\un{i}(\pi)}=P^{\pi,0}$ for $\pi\in\mathscr{D}(m,n)$.
\begin{prp}
Let $\al_1$ and $\al_2$ be two non-zero integers of opposite sign. Let $\ga$ be the positive greatest common divisor of $\al_1$ and $\al_2$ and put $\al=\max\{\al_1,\al_2\}$. 
Then we have
\begin{equation}\label{eq:multigraph-factorization} 
(t_1,t_2)=
P^{\pi_0}(u+\al-1)P^{\pi_0}(u+\al-2)\cdots P^{\pi_0}(u+\al-\ga)
\end{equation} 
where $\pi_0$ is the unique zero area generalized Dyck path from $(0,0)$ to $(m,n)$ where $m=|\al_2/\ga|$ and $n=|\al_1/\ga|$.
\end{prp}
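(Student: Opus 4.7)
The plan is to verify the stated factorization componentwise by matching roots. Without loss of generality assume $\al_1<0<\al_2$, so $\al=\al_2$ and $N=|\al_1|+\al_2$. Then $t_1$ is monic of degree $|\al_1|$ with simple roots $\iv{1}{|\al_1|}$, while $t_2$ is monic of degree $\al_2$ with simple roots $\iv{-\al_2+1}{0}$, and these root sets are disjoint.

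On the right-hand side, the underlying sequence $\un{i}_0'$ of the Dyck path $\pi_0\in\mathscr{D}(m,n)$ lies in $\mathsf{M}(\al_1,\al_2)$ since $m\al_1+n\al_2=0$; hence $P^{\pi_0}\in S(\al_1,\al_2)$ by Proposition~\ref{prp:fundamental-solution}, each shifted copy $P^{\pi_0}(u+\al-j)$ is also in $S(\al_1,\al_2)$, and their product belongs to $S(\al_1,\al_2)$ by Proposition~\ref{prp:elementary-properties}. Since both sides of \eqref{eq:multigraph-factorization} are pairs of monic polynomials, the problem reduces to checking that the multisets of roots of the two components on the right agree with the roots of $t_1$ and $t_2$.

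The key step is to describe the partial sums $\eta_k=\al_{i_1}+\cdots+\al_{i_k}$ of $\un{i}_0'$ and their partition according to the value of $i_k$. Writing $\eta_k=\ga\eta_k'$, where $\eta_k'$ is the partial sum computed with the reduced pair $(\al_1/\ga,\al_2/\ga)$, and applying the preceding lemma to that reduced pair, we get $\eta_k'\in\iv{0}{m+n-1}$, hence $\eta_k\in\{0,\ga,2\ga,\ldots,N-\ga\}$. Because $\un{i}_0'$ is cyclically irreducible (Proposition~\ref{prp:irred}), the $\eta_k$ are distinct by Proposition~\ref{prp:fundamental-properties}, so they exhaust that set. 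Moreover, the recursion in the proof of the preceding lemma gives $i_k=2$ exactly when $\eta_{k-1}<|\al_1|$, i.e.\ exactly when $\eta_k=\eta_{k-1}+\al_2\in\{\al_2,\al_2+\ga,\ldots,N-\ga\}$; and $i_k=1$ exactly when $\eta_k\in\{0,\ga,\ldots,\al_2-\ga\}$. By Definition~\ref{dfn:fundamental-solutions}, these two sets are respectively the roots of $P^{\pi_0}_1$ and $P^{\pi_0}_2$.

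All that remains is to shift and union. The roots of $\prod_{j=1}^{\ga}P^{\pi_0}_1(u+\al_2-j)$ form the set $\{j+k\ga:1\le j\le\ga,\,0\le k\le|\al_1|/\ga-1\}=\iv{1}{|\al_1|}$, each with multiplicity $1$ since the shifts corresponding to distinct $j$ differ in their residues modulo $\ga$; this matches the roots of $t_1$. The analogous union of the second components equals $\iv{-\al_2+1}{0}$, matching $t_2$. I expect the main obstacle to be the bookkeeping in the third paragraph --- precisely identifying the partition of $\{0,\ga,\ldots,N-\ga\}$ induced by the zero-area recursion via the rescaling $\eta_k=\ga\eta_k'$ --- while the shifting/union computation and the reduction to matching monic polynomials are routine.
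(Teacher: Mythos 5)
The paper itself gives no argument here --- it says ``Rather than giving a detailed proof, we provide some examples'' --- so your write-up supplies a proof where the authors supply none, and the core of it is correct. Your route is the natural one: reduce to matching root multisets of monic polynomials; apply the zero-area lemma to the reduced pair $(\al_1/\ga,\al_2/\ga)$ to see that the partial sums $\eta_k$ of $\un{i}(\pi_0)$ lie in $\{0,\ga,\ldots,N-\ga\}$; use Propositions \ref{prp:irred} and \ref{prp:fundamental-properties} to get that the $\eta_k$ are distinct and hence exhaust that set; read off from the recursion in the lemma's proof that the $\eta_k$ with $i_k=2$ fill $\{\al_2,\al_2+\ga,\ldots,N-\ga\}$ and those with $i_k=1$ fill $\{0,\ga,\ldots,\al_2-\ga\}$, which by Definition \ref{dfn:fundamental-solutions} are the roots of $P^{\pi_0}_1$ and $P^{\pi_0}_2$; then the shift-and-union computation gives exactly the simple roots $\iv{1}{|\al_1|}$ and $\iv{-\al_2+1}{0}$ of $t_1,t_2$ from \eqref{eq:HS-solutions}. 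I checked this bookkeeping, including the multiplicity-one argument via residues modulo $\ga$, and it agrees with the paper's $(\al_1,\al_2)=(-4,6)$ example. (The observation that the right-hand side lies in $S(\al_1,\al_2)$ is not needed for the equality, but is harmless.)

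One caveat: your opening ``without loss of generality $\al_1<0<\al_2$'' is not a purely notational reduction. Swapping $\al_1$ and $\al_2$ interchanges the labels $1,2$, i.e.\ reflects paths across the diagonal, and the reflection of $\pi_0$ lies weakly \emph{below} the line, so it is not a generalized Dyck path in the sense of Definition \ref{dfn:lattice-path}. Concretely, for $(\al_1,\al_2)=(1,-1)$ one has $(t_1,t_2)=(u,u-1)$ while the zero-area Dyck path gives $P^{\pi_0}(u+\al-1)=(u+1,u)$, so in the case $\al_1>0>\al_2$ formula \eqref{eq:multigraph-factorization} must be read with the zero-area path below the line (equivalently, with the sequence satisfying \eqref{eq:zero-area-condition}, or with the labels interchanged). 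This imprecision is inherited from the paper itself --- its lemma's proof also waves at ``the other case being symmetric'' and all its examples have $\al_1<0<\al_2$ --- but if you keep the WLOG you should state explicitly the relabelling under which the two cases correspond, rather than leaving it implicit.
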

By Remark \ref{rem:integral-factor}, the factorization into irreducibles \eqref{eq:multigraph-factorization} is unique, since all factors have roots in different cosets modulo $\Z\al_1+\Z\al_2=\Z\ga$.
Rather than giving a detailed proof, we provide some examples.
\begin{example}
(Type $A_2$) Let $\al_1=-1$ and $\al_1=1$.
Then $(t_1,t_2)=P^{\pi_0}(u)=\big(u-1,\,u\big)$.
\end{example}
\begin{example}
(Type $C_2$) Let $\al_1=-1$ and $\al_1=2$.
Then $(t_1,t_2)=P^{\pi_0}(u+1)=\big(u-1,\,u(u+1)\big)$.
\end{example}
\begin{example}
Let $\al_1=-4$ and $\al_2=6$. Then $\ga=2$, $m=3$, $n=2$, $\un{i}_0=(21211)$ and
$\pi_0=\big((0,0),(0,1),(1,1),(1,2),(2,2),(3,2)\big)$.
We have
$P^{\pi_0}(u)=\big((u-6)(u-8),\,(u-2)(u-4)u\big)$
and hence
\begin{align*}
(t_1,t_2)&=\big((u-1)(u-2)(u-3)(u-4),\, u(u+1)(u+2)(u+3)(u+4)(u+5)\big)\\
&=P^{\pi_0}(u+5)P^{\pi_0}(u+4).
\end{align*}
\end{example}

\bibliographystyle{siam}

\end{document}